\numberwithin{equation}{section}
\newtheorem{theorem}{Theorem}[section]
\newtheorem{lemma}[theorem]{Lemma}
\theoremstyle{definition}
\theoremstyle{remark}
\newenvironment{romenumerate}{\begin{enumerate}
 }{\end{enumerate}}
\newcounter{oldenumi}
{\setcounter{oldenumi}{\value{enumi}}
\begin{romenumerate} \setcounter{enumi}{\value{oldenumi}}}
{\end{romenumerate}}
\newcounter{thmenumerate}
\newcounter{xenumerate}   
\xdef\klockan{\the\count1.0\the\count255}
\xdef\klockan{\the\count1.\the\count255}\fi
\newcommand{\halmos}{\rule{1ex}{1.4ex}}
\newcommand{\proofbox}{\hspace*{\fill}\mbox{$\halmos$}}
\def\rompar(#1){\textup(#1\textup)}    
\def\xexp(#1){e^{#1}}
\newcommand\R{\mathbb R}
\newcommand\N{\mathbb N}  
\newcommand\Z{\mathbb Z}
\newcommand\I{\mathbb I}
\newcounter{CC}
\newcommand\E{\operatorname{\mathbb E{}}}
\renewcommand\P{\operatorname{\mathbb P{}}}
\newcommand\cG{\mathcal G}
\def\[#1]{[\![#1]\!]}
\begin{document}
\title
{A fixed-point approximation for a routing model in equilibrium}

\date{19 June 2013}

\author{Graham Brightwell}
\address{Department of Mathematics, London School of Economics,
Houghton Street, London WC2A 2AE, United Kingdom}
\email{g.r.brightwell@lse.ac.uk}
\urladdr{http://www.maths.lse.ac.uk/Personal/graham/}

\author{Malwina J. Luczak} \thanks{The research of Malwina Luczak is supported by an EPSRC Leadership Fellowship, grant reference EP/J004022/2.}
\address{School of Mathematical Sciences, Queen Mary, University of London}
\email{m.luczak@qmul.ac.uk}

\keywords{Markov chains, concentration of measure, coupling, rapid mixing, law of large numbers, load balancing, Erlang fixed point approximation}
\subjclass[2000]{60C05, 60F05, 60J75}

\begin{abstract}
We use a method of Luczak~\cite{l12} to investigate the equilibrium distribution of a dynamic routing model on a network.
In this model, there are $n$ nodes, each pair joined by a link of capacity~$C$.  For each pair of nodes, calls arrive for this pair of endpoints as a Poisson process with rate~$\lambda$.  A call for endpoints $\{u,v\}$ is routed directly onto the link between the two nodes
if there is spare capacity; otherwise $d$ two-link paths between $u$ and $v$ are considered, and the call is routed along a path with lowest maximum load,
if possible.  The duration of each call is an exponential random variable with unit mean.  In the case $d=1$, it was suggested by Gibbens, Hunt and
Kelly in 1990 that the equilibrium of this process is related to the fixed points of a certain equation.  We show that this is indeed the case, for every 
$d \ge 1$, provided the arrival rate $\lambda$ is either sufficiently small or sufficiently large.  In either regime, we show that the equation has a unique
fixed point, and that, in equilibrium, for each $j$, the proportion of links at each node with load $j$ is strongly concentrated around the $j$th
coordinate of the fixed point.
\end{abstract}

\maketitle

\section{Introduction}\label{S:intro}

We consider a routing model in continuous time, where calls have Poisson arrivals and exponential durations, versions of which were studied earlier
in~\cite{aku,ch,ghk,gm,l12,lm08,lmu03,lu}.  This is to a large extent a companion paper to Luczak~\cite{l12}, using the same methods and some of the results 
of that paper; here, our focus is on the properties of the model in equilibrium.

The setting is as follows. For each $n \in \N$, we have a fully connected {\em communication graph} $K_n$, with node set $V_n=\{1, \ldots, n\}$ and link set
$E_n = \{\{u,v\}: 1 \le u < v \le n\}$.  Each link $\{u,v\} \in E_n$ (which we often denote as $uv$) has capacity $C \in \Z^+$.  Calls arrive as a Poisson process with rate $\lambda {n \choose 2}$, where $\lambda$ is a positive constant, and each arriving call has a pair $\{ u,v\}$ of endpoints, chosen
uniformly from among the $\binom{n}{2}$ possible pairs.  The arriving call is routed {\em directly} along the link $uv$, if that link has fewer than $C$ calls currently using it.  If the link $uv$ is fully loaded, then we pick an ordered list $(w_1, \ldots, w_d)$ of $d$ possible intermediate nodes from
$V_n \setminus \{u,v\}$, uniformly at random with replacement, and the call is routed {\em indirectly} along one of the two-link routes $uw_1v, \ldots, uw_dv$,
chosen to minimise the larger of the current loads on its two links, subject to the capacity constraints.  Ties are broken in favour of the first `best' route
in the ordered list.  If none of the $d$ two-link paths is available, then the call is lost.  Call durations are unit mean exponential random variables,
independent of one another and of the arrivals and choices processes.  This routing strategy is called the BDAR (Balanced Dynamic Alternative Routing)
Algorithm.

This model was introduced by Gibbens, Hunt and Kelly~\cite{ghk} in 1990, in the case $d=1$, motivated by a dynamic routing algorithm used at that time by
British Telecom.  Gibbens, Hunt and Kelly did not analyse the model directly, stating that ``It is difficult to analyse the process, even in equilibrium''.
Instead, they analysed a simplified version where the graph structure is neglected: there are $K$ links, and each arriving call chooses one ``direct link''
and two ``alternative links'', uniformly independently at random from the set of all links.  If the direct link has spare capacity, then the call is routed
along that link; if not, then the call is routed on the two alternative links if both have spare capacity.  If a call is accepted onto the two alternative
links, then the durations of that call on each of the two links are independent exponential random variables with mean~1.  These devices are designed to ensure
that the state of this system at time~$t$ can effectively be captured solely by the proportions $\xi_t(j)$ of links with $j$ calls, for $j =0, \dots, C$.  Gibbens, Hunt and Kelly proved a functional law of large numbers for this simplified system, showing in particular that the vector $\xi_t$ converges to the
solution of a certain differential equation.  They suggested that the BDAR network model should behave similarly to their simplified version.  They also noted that this behaviour is not always benign: for certain values of the parameters, the differential equation has multiple fixed points, and they gave a heuristic
argument indicating that, in some such cases, the time taken to ``tunnel'' from the neighbourhood of one fixed point to the other is exponential in the number
$K$ of links.

Following the work of Gibbens, Hunt and Kelly, a number of authors studied various aspects of the BDAR model, and/or variants of it.  Crametz and Hunt~\cite{ch} and Graham and M\'el\'eard~\cite{gm} showed that, with suitable assumptions on the initial conditions, the behaviour of the BDAR model follows that of the
differential equation over a constant time interval in the case $d=1$.  These results are considerably extended by Luczak~\cite{l12}, who gives quantitative
results, and also treats the case of general $d \ge 1$.  

Several variants on the BDAR model have been considered in the literature.  Gibbens, Hunt and Kelly~\cite{ghk} introduce a version, later called the FDAR model, where again $d$ alternative two-link routes are considered, but now the call is routed along the {\em first} route in the list where both links have spare
capacity, if any.  Luczak and Upfal~\cite{lu}, Luczak, McDiarmid and Upfal~\cite{lmu03}, Anagnostopoluos, Kontyoannis and Upfal~\cite{aku}, and Luczak and McDiarmid~\cite{lm08} consider versions where each link has some of its capacity reserved for indirectly routed calls and (possibly) some for directly 
routed calls: these versions have the advantage that, for $d\ge 2$, the model exhibits the ``power of two choices'' phenomenon, leading to far more even allocations of loads across the network.  An explicit illustration of this concerns the minimum value of the capacity $C = C(n,\lambda,d)$ required so that few
or no calls are lost over a long time interval, which is of order $\log \log n$ if $d \ge 2$, some capacity is reserved for indirectly routed calls, and 
the BDAR rule is used to select which indirect route to use.

Luczak and McDiarmid~\cite{lm08} prove results about the equilibrium distribution, for a variant of the model with reserved capacity, in the case where the capacity $C$ tends to infinity with $n$, proving in particular that, in that model, which exhibits the power of two choices, the proportion of links of 
load~$j$ falls off doubly exponentially with~$j$.  To the best of our knowledge, this is the only previous work on the equilibrium distribution of any routing model in this family.  In our model, for parameter values where the approximating differential equation has a unique fixed point, it is natural to expect that
the proportions $\eta(j)$ are well concentrated, and that their expectations are well-approximated by the corresponding coordinate of this fixed point, but up to now there have been no results along these lines.  Analysing the fixed point for our model would show that the proportions of links with each load fall off
singly exponentially in the regime with $\lambda$ small.  For the regime we study with $\lambda$ large, most links have load $C$ in equilibrium.  

In this paper, we prove such a result for two ranges of parameter values.  We assume throughout that the number $d$ of choices and the capacity $C$ are fixed
positive integers, the arrival rate $\lambda$ is also constant, and $n$ tends to infinity.  We consider the cases where $\lambda \le m_1/d$, and where
$\lambda \ge m_2 C^2 d \log(C^2 d))$, for suitable constants $m_1,m_2$.  We prove that, in either of these two regimes, the corresponding sequence of Markov
chains is rapidly mixing, and that, asymptotically, the vector $\eta$ is well concentrated around the unique fixed point of the differential equation
(even more, for each node $v$, the proportion of links incident to $v$ with each load $j$ is well concentrated around the fixed point).  This establishes a
strong form of the `Erlang fixed point approximation' proposed in~\cite{ghk} in these regimes.  See Kelly's survey~\cite{k91} for more information. 
Some cases where $C$, $d$ and $\lambda$ are slowly growing functions of $n$ can also easily be handled by our methods, as in~\cite{l12}: note that in 
these circumstances there is no single differential equation approximating the process.  

Such `law of large numbers in equilibrium' results have been difficult to prove in settings where there are potentially strong
dependencies between system elements (in this context, links).  Here we are able to prove that these dependencies are negligible; it turns out that, in a
suitable sense, links in certain collections evolve approximately independently of one another.


Our technique was developed by Luczak~\cite{l12}, and uses a general concentration of measure inequality proved in that paper.  The heart of our argument
is the analysis of a natural coupling introduced in~\cite{l12}, which in either of the two regimes we study is contractive.
Using this coupling, we prove that two copies of the Markov chain coalesce quickly.  The same coupling is also used to show
that slowly-changing functions of the process (for instance, the number of links incident with a node $v$ with load exactly $j$ at a given time $t$, for
each node $v$ and each $j \in \{0,1, \ldots, C\}$) are well concentrated at each time $t$, with bounds uniform over $t$. This can then be used to show that
such functions are also well concentrated in equilibrium.  (Note that in~\cite{l12}, no attempt was made to show that the coupling is contractive; instead
it was shown that copies of the process starting close to one another do not diverge too quickly.  This is why the concentration of measure bounds in~\cite{l12}
are not uniform in time, but only work over a bounded time interval or an interval of length only slowly increasing with $n$.)
Thanks to the strong concentration of measure, it is then possible to show that the equilibrium balance equations for functions of interest
factorise approximately, leading to a fixed point approximation.

We believe that both the concentration of measure inequalities and the technique as a whole will find further applications.  The method is suitable for proving
laws of large numbers in a wide variety of settings, and -- as we demonstrate in this paper -- it may sometimes be effective in circumstances where more standard techniques cannot be used.

Our technique would require only very minor amendments to handle the FDAR model, where the first available indirect route is chosen, rather than the `best'.  
The method could also easily be adapted to handle variations with some capacity reserved for indirectly routed calls, although the details would be somewhat
different.  We intend to cover such a variant in future work, allowing also $C$ to tend to infinity with $n$.


\medskip

For each link $uv \in E_n$, let $X_t (uv,0)$ denote the number of calls in progress for endpoints $\{u,v\}$ at time $t$ which are routed directly along the link $uv$.  For each pair $\{u,v\}$ of distinct nodes, and each node
$w \in V_n \setminus \{u,v\}$, let $X_t(uwv)$ denote the number of calls in progress at time $t$ which are routed along the path $uwv$ consisting of links
$\{uw, wv\}$, that are in progress at time~$t$.  We call $X_t = ((X_t(uv,0))_{u\not=v}, (X_t (uwv))_{u\not=v, w\not=u,v})$ the {\em load vector}
at time $t$, and let $S = \{0,1, \ldots, C\}^{\binom{n}{2} (n-1)}$ denote the state space, containing the set of all possible load vectors.
Then $X=(X_t)_{t \geq 0}$ is a continuous-time discrete-space Markov chain.  The chain $X$, its state space, and functions
on the state space depend on the number $n$ of nodes, but we will suppress this dependence in our notation.
Given a load vector $x \in S$ and a link $uv \in E_n$, let $x(uv)$ denote the load of link $uv$:
$$
x(uv) = x(uv,0) + \sum_{w \not \in \{u,v\}} \big(x(uvw)+ x(vuw)\big).
$$

We will also work with a jump chain $\widehat{X}_t$ corresponding to $X_t$. The chain we use is not the standard embedded chain but a slower
moving version that will often not change its state at a given step.  Given that the current state, at time $t \in \Z^+$, is $x \in S$,
the next event is an arrival with probability
$$
p(\lambda, C) = \frac{\lambda}{\lambda+ C},
$$
and a {\it potential} departure with probability $1-p(\lambda, C) = C/(\lambda + C)$.
Given that the event is an arrival, each pair of endpoints $\{u,v\}$ is chosen with probability $1/{n \choose 2}$, then each $d$-tuple of intermediate nodes
is chosen with probability $(n\!-\!2)^{-d}$, and the call is routed directly on $uv$ if $x(uv) < C$, and otherwise along the
first two-link route among the $d$ selected that minimises the maximum load of a link.  Given that the event is a potential departure, the calls
currently in the system are enumerated from~1 up to at most $C {n \choose 2}$, and then a number is chosen uniformly at random from
$\{ 1,\dots, C {n \choose 2}\}$.  If there is a call assigned to this number, it departs; otherwise nothing happens.

We claim that the discrete jump chain $\widehat{X}$ and the continuous-time chain $X$ have the same equilibrium distribution.  To see this, note that the $Q$-matrix $Q$ for the continuous-time chain and the transition matrix $P$ of the discrete jump chain satisfy $P = \frac{1}{(\lambda+C)\binom{n}{2}} Q + I$.
Indeed, given any transition between different states in the continuous chain, with rate $r > 0$, the corresponding entry in $P$ is
$r / (\lambda + C) \binom{n}{2}$, by construction; also, the diagonal entries of $P$ are given by $p_{xx} = 1 - \sum_{y \not= x} p_{xy}$, and the diagonal
entries of $Q$ are given by $q_{xx} = - \sum_{y\not= x} q_{xy}$.  Thus the equilibrium vector $\pi$ of the discrete jump chain, which satisfies
$\pi P = \pi$, also satisfies $\pi Q = 0$, and hence is the equilibrium of the continuous-time chain $X$.  As our key results concern the equilibrium
distribution of the chains, we may and shall work throughout with the discrete jump chain.

The same chain is studied in~\cite{l12}, with no assumptions on $\lambda$, over a bounded time interval.  It is shown there
that, subject to suitable initial conditions, the chain does stay close to the solution to a certain differential equation over such a time interval.

For a vector $\xi = (\xi(k): k=0, \ldots, C)$, let $\xi(\le\! j) = \sum_{k=0}^j \xi(k)$.
Define $F: \R^{C+1} \to \R^{C+1}$ by
\begin{eqnarray}
F_0 (\xi) & = & - \lambda \xi(0)- \lambda g_0(\xi)+ \xi(1), \nonumber \\
F_k (\xi) & = & \lambda \xi(k-1) - \lambda \xi(k) + \lambda g_{k-1} (\xi) - \lambda g_k(\xi) \nonumber \\
&&\mbox{} -  k \xi(k) + (k+1) \xi(k+1), \qquad \qquad 0 < k < C, \nonumber \\
F_C (\xi) & = & \lambda \xi (C-1) + \lambda g_{C-1} (\xi) - C \xi(C), \label{eq.F}
\end{eqnarray}
where the functions $g_j$, $j=0, \ldots, C-1$, are given by
\begin{eqnarray}
g_j (\xi) & = & 2 \xi(C) \xi(j) \xi(\le\! j) \sum_{r=1}^d (1-\xi(\le\! j)^2)^{r-1}
(1-\xi(\le\! j\!-\!1)^2)^{d-r} \label{eq.G} \\
&& \mbox{} + 2 \xi(C) \xi(j) \sum_{i=j+1}^{C-1} \xi(i)
\sum_{r=1}^d(1-\xi(\le\! i)^2 )^{r-1} (1-\xi(\le\! i\!-\!1)^2)^{d-r}. \nonumber
\end{eqnarray}

As shown, effectively, by Gibbens, Hunt and Kelly~\cite{ghk}, these equations describe the evolution of their simplified process.  Later, 
Crametz and Hunt~\cite{ch}, Graham and M\'el\'eard~\cite{gm} and Luczak~\cite{l12} showed that the same equations do describe the evolution of the 
true process, under suitable initial conditions.  If we imagine that
links behave completely independently, each having load $j$ at time $t$ with probability $\xi(j)$, then $\lambda g_j(\xi)$ would be proportional to
the rate of arrivals which are indirectly routed onto a link with current load $j$.  The first summand in
(\ref{eq.G}) corresponds to the event that the other link on the indirect route considered has load at most $j$, and the index $r$ in the
summation is the position of this route in the list of $d$ indirect routes considered: each route ahead of it in the list has to have a link of load greater
than $j$, and each route behind it in the list has to have a link of load at least $j$.  The second summand corresponds to the event that the other link on
the route considered has load $i > j$.  In the idealisation, $F_k(\xi)$ is proportional to the expected rate of change in the number of links of
load $k$.  In the case $0<k<C$, the six terms in the expression for $F_k(\xi)$ correspond respectively to: arrivals routed directly onto a link of load $k-1$,
arrivals routed directly onto a link of load $k$, arrivals routed indirectly onto a link of load $k-1$, arrivals routed indirectly onto a link of load $k$,
departures from a link of load $k$, and departures from a link of load~$k+1$.  In particular, in equilibrium, the expected proportions $\eta(j)$ of links
with each load $j$ should be close to a solution of the equation $F(\eta)=0$.  By symmetry, the same approximation should hold if we instead count
the expected proportion of links incident to a given node with load $j$.

Set $\Delta^{C+1} = \{ \xi \in [0,1]^{C+1} : \sum_{j=0}^C \xi(j) = 1\}$.
It is proved in~\cite{l12} that, for any $\xi_0 \in \Delta^{C+1}$, the equation
\begin{equation}
\frac{d\xi_t}{dt} = F(\xi_t)
\label{eq.diff-eq}
\end{equation}
has a unique solution starting from $\xi_0$, with $\xi_t \in \Delta^{C+1}$ for all $t \ge 0$.

Given a load vector $x$, node $v$ and $k \in \Z^+$, let $f_{v,k} (x) = \sum_{w\not=v} \I_{vw}^k(x)$ be the number of links in $x$ with one end $v$
carrying exactly $k$ calls.  The main theorem of~\cite{l12} states that, for each node $v$, and each $k \in \{0, \dots, C\}$, over any bounded 
time interval $[0,t_0]$, the function $\frac{1}{n-1} f_{v,k}(X_t)$ closely follows the solution to the differential
equation~(\ref{eq.diff-eq}), with high probability, provided $n$ is sufficiently large.  The result is quantitative, with
explicit bounds on the fluctuations, in terms of the parameters of the model and also of a function $\phi$ measuring
how ``uniform'' the initial state of the system is.  

%

Our results concern the equilibrium distribution of the chain $X$, or equivalently of the discrete jump chain $\widehat{X}$.
We set $\lambda_0 = \lambda_0(d) = 1/(8d+4)$, and $\lambda_1 = \lambda_1(d,C) = 8000C^2d \log(240C^2d)$.  Let $\pi$ denote the equilibrium
distribution of the chain, and $\mu(x_0,t)$ denote the distribution of $\widehat{X}_t$ conditional on $\widehat{X}_0 = x_0$, for any $x_0 \in S$ and $t \ge 0$.
Let $\widehat Z$ denote a copy of the discrete jump chain in equilibrium.  Our results are as follows.

\begin{theorem} \label{thm.low-high}
For $d, C \in \N$, if either $\lambda < \lambda_0(d)$, or $\lambda \ge \lambda_1(d,C)$, then there is $n_0 = n_0(d,C,\lambda)$
such that the following hold for all $n \ge n_0$.
\begin{enumerate}
\item The discrete jump chain $\widehat{X}$ is rapidly mixing: there are constants $\gamma = \gamma(d,C,\lambda)$ and $K = K(d,C,\lambda)$ such that
$d_{TV}(\mu(x_0,t),\pi) \le K n^2 e^{-\gamma t/n^2}$ for all $x_0 \in S$ and all $t \in [0,n^{5/2}]$.  In particular, $d_{TV}(\mu(x_0,t),\pi) = o(1)$ for
$t \ge 3 \gamma^{-1} n^2 \log n$.
\item There are constants $c_1, c_2 > 0$, depending on $d$, $C$ and $\lambda$, such that, for each node $v$, each $j \in \{0,\dots, C\}$, each~$t$, 
and any $a >0$,
$$
\P_\pi \left( | f_{v,j} (\widehat{Z}_t) - \E_\pi f_{v,j} (\widehat{Z}_t) | > 2a \right) \le 3 \exp\left( - \frac{ a^2 }{ c_1 n + c_2 a} \right).
$$
\item There is a unique solution $\eta^* \in \Delta^{C+1}$ to the equation $F(\eta) = 0$.
\item For all nodes $v$, all $j \in \{ 0, \dots, C\}$, and all $t$,
$$
\Big| \frac{1}{n-1} \E_\pi f_{v,j} (\widehat{Z}_t) - \eta^*(j) \Big| \le 160 d^2 (C+1)^4 \frac{\log n}{\sqrt n}.
$$
\item Let $A$ be the event that $| f_{v,j}(\widehat{Z}_t) - (n-1) \eta^*(j) | \le 200 d^2 (C+1)^4 \sqrt n \log n$, for all nodes $v$ and all
$j \in \{ 0, \dots, C\}$.  Then $\P_\pi (\overline{A}) \le 3Cn^2 e^{-\delta \log^2 n}$ for some constant $\delta = \delta(d,C,\lambda)$.
\end{enumerate}
\end{theorem}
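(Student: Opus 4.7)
The plan is to execute five interlocking steps with the contractive coupling of~\cite{l12} as the unifying tool. First, I would work with the coupling that uses identical arrival streams (same endpoint pair and same ordered list of $d$ intermediate nodes) and a maximal coupling of the potential-departure slots. Measure the discrepancy between two copies $\widehat X, \widehat X'$ by a weighted Hamming-type distance on links---for instance the total $\sum_{uv} |x(uv) - x'(uv)|$, or a version counting links whose full state differs. The core technical claim is that in either regime $\lambda<\lambda_0$ or $\lambda\ge\lambda_1$, the expected distance after one step of the jump chain contracts by a factor $1-\gamma/n^2$ for some $\gamma=\gamma(d,C,\lambda)>0$. When $\lambda<\lambda_0$ this should be tractable: direct routing dominates, so most arrivals do the same thing in both copies on links with matching load, and ``extra'' calls on differing links simply depart. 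When $\lambda\ge\lambda_1$ the argument is more delicate: nearly every link has load $C$, so most arrivals trigger indirect routing, and one must control, via the bound $\lambda\ge 8000C^2d\log(240C^2d)$, the small probability that the two copies select different two-link routes among the $d$ candidates. I expect the high-$\lambda$ contraction analysis, with its case split over the joint outcomes of the $d$-tuple of intermediates in the two chains, to be the main obstacle.

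Granted single-step contraction, part (i) is then standard path coupling: $d_{TV}(\mu(x_0,t),\pi)$ is dominated by the expected coupling distance, which starts at $O(n^2)$ and decays by $(1-\gamma/n^2)^t\le e^{-\gamma t/n^2}$. For part (ii), I would feed the contractive coupling into the general concentration inequality of~\cite{l12}: since $f_{v,j}$ changes by only $O(1)$ per jump and since perturbations now contract rather than accumulate, the ``variance proxy'' telescopes into a geometric sum of size $O(n)$ uniformly in~$t$, yielding the claimed sub-exponential bound $3\exp(-a^2/(c_1 n+c_2 a))$. Crucially, the uniformity in $t$---which was absent in~\cite{l12}---is bought precisely by contractivity.

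For part (iii), existence of a zero of $F$ on $\Delta^{C+1}$ follows either from Brouwer applied to $\xi\mapsto\xi+\epsilon F(\xi)$ for small $\epsilon$, or as a limit point of the flow $\xi_t$ solving $d\xi_t/dt=F(\xi_t)$ which stays in the simplex. Uniqueness is handled regime by regime. For $\lambda<\lambda_0$, the $g_j$ are quadratic in $\xi(C)$, which will itself be small, so $F(\eta)=0$ essentially reduces to a near-Poisson recursion $\eta^*(j+1)\approx(\lambda/(j+1))\eta^*(j)$, with the $g$-terms a contractive perturbation; iterating gives a unique solution. For $\lambda\ge\lambda_1$, one first shows $\eta^*(C)\ge 1-O(1/\lambda)$ from the $C$-th balance equation, then solves for $\eta^*(C-1),\ldots,\eta^*(0)$ top-down, each being determined up to a small nonlinear correction.

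Finally, parts (iv) and (v) combine (ii) and (iii). The equilibrium identity $\E_\pi[\Delta f_{v,j}(\widehat Z)]=0$, when expanded, produces expectations of products of link-load indicators (encoded exactly in the terms of $F_k$ and $g_j$). By the uniform concentration of (ii), each such product factorises as a product of expectations with error $O(\log n/\sqrt n)$, yielding an approximate equation $F(\tilde\eta)=O(\log n/\sqrt n)$ for $\tilde\eta(j):=\E_\pi f_{v,j}(\widehat Z)/(n-1)$. A local stability / non-degeneracy argument for the unique root $\eta^*$ (a Lipschitz inverse of $F$ near $\eta^*$, deducible from the same regime-specific analysis used for uniqueness) then gives (iv). Part (v) is immediate: choose $a=100 d^2(C+1)^4\sqrt n\log n$ in (ii), combine with (iv) via the triangle inequality, and take a union bound over the $n$ nodes and $C+1$ values of~$j$ to obtain the stated $3Cn^2 e^{-\delta\log^2 n}$ bound.
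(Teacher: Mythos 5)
Your overall architecture matches the paper's: a contractive coupling with synchronised arrivals and paired departures, a general concentration inequality fed by the coupling bound, equilibrium balance equations which factorise via concentration, and a regime-specific contraction argument for uniqueness of the fixed point and stability of approximate solutions. Parts (i), (ii), (iv), (v) of your plan are essentially the paper's route, modulo details. Your existence argument for (iii) via Brouwer (or as a limit point of the flow) differs mildly from the paper, which relies solely on the same contraction estimate used for uniqueness; either works.

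The genuine gap is in the high-$\lambda$ coupling analysis, which you explicitly flag as ``the main obstacle'' and then do not resolve. The naive distance you propose --- either $\sum_{uv}|x(uv)-x'(uv)|$ or the route-level $\ell_1$ distance --- does \emph{not} contract when $\lambda$ is large: the one-step analysis (which the paper carries out in Lemma~\ref{lem.coupling}) gives a factor $1-\frac{1-(8d+4)\lambda}{(\lambda+C)\binom{n}{2}}$, which exceeds~$1$ as soon as $\lambda \ge \lambda_0(d)$. Case-splitting over the joint outcomes of the $d$-tuple of intermediates will not rescue this; the expansion comes from indirectly routed arrivals creating new unmatched indirect calls faster than unpaired calls depart. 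The paper's fix is twofold and both pieces are essential: (a) it replaces the distance by a tailored metric $d(x,y) = (4C+1)\sum_{uv,w}|x(uwv)-y(uwv)| + \sum_{uv \in A_{x,y}}(C - \min(x(uv),y(uv)))$, whose key feature is that a direct arrival onto an unbalanced link \emph{decreases} the distance even when it does not cause immediate coalescence, giving a large source of negative drift proportional to $\lambda$; and (b) the contraction under this metric only holds on the set $R$ of states where, at every node, almost all incident links are fully loaded, so one must first prove (Lemma~\ref{lem.R}) that after a burn-in of $O(n^2)$ steps the chain stays in $R$ over a polynomially long window, with high probability, and then glue the expansive burn-in estimate to the contractive post-burn-in estimate (Lemma~\ref{lem.coupling2}). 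Without some substitute for both of these ideas, your single-step contraction claim is false in the high-$\lambda$ regime, and (i) and (ii) would not follow.

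A smaller point: for parts (iv) and (v), your ``Lipschitz inverse of $F$ near $\eta^*$'' is a local statement, whereas the paper establishes the stronger global contraction estimate that \emph{any} $\zeta \in \Delta^{C+1}$ satisfying the approximate equation is within $O(\log n / \sqrt{n})$ of the unique exact solution. The global version is what you need, since a priori you do not know $\zeta$ lies close to $\eta^*$. Your regime-specific analysis (small $\eta^*(C)$ for small $\lambda$, large $\eta^*(C)$ for large $\lambda$) is the right ingredient and the paper uses it in exactly this way; you just need to push it into a global, not local, perturbation bound.
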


Note that Theorem~\ref{thm.low-high}(5) follows immediately from parts~(2) and~(4), with $a = 30 d^2 (C+1)^4 \frac{\log n}{\sqrt n}$.  As remarked earlier,
parts~(2), (4) and~(5) apply equally to the continuous time chain $Z$ in equilibrium, as the equilibrium distributions of the two chains are the same.  It is
also easy to deduce that the continuous time chain $X$ is rapidly mixing, in time $O(\log n)$, in either of the two regimes considered.

We can deduce immediately from Theorem~\ref{thm.low-high}(5) that, in either regime, the total number of links in the network of each load~$j$ is within
$O(n^{3/2}\log n)$ of $\binom{n}{2} \eta^*(j)$.  We expect that this error term is not optimal.

Theorem~\ref{thm.low-high}(3) is not true without some assumptions on the parameters: Gibbens, Hunt and Kelly~\cite{ghk} found
that, for $d=1$ and $C$ sufficiently large, there is a range of $\lambda$ (apparently roughly of order $C$) where there are multiple solutions
to the fixed-point equation $F(\eta) = 0$.  In such circumstances, we should not expect rapid mixing, or strong concentration of measure in
equilibrium.  However, it should be possible to improve the functions $\lambda_0$ and $\lambda_1$; the results above are likely
to hold for any values of the parameters where the equation $F(\eta) = 0$ does have a unique solution.  For instance, in the special case
$d=1$, $C=1$, the equation has a unique solution for every $\lambda >0$, and we feel that the results above should hold for the whole
range of $\lambda$ in this case.  Preliminary calculations indicate that we can improve the bounds on $\lambda$ at the expense of increased
complexity in the proofs: we intend to return to this in a subsequent paper, but for now our main purposes are to show that we do indeed have strong
concentration of measure in some ranges of the parameters, and to illustrate the power of our methods.

The structure of the paper is as follows.  We begin with some preliminary results and definitions in Section~\ref{S:prelim}, including the definition
of the key coupling we use.  The next two sections are devoted to the analysis of the coupling in the two regimes, $\lambda < \lambda_0(d)$ and
$\lambda \ge \lambda_1(d,C)$ respectively.  In these sections, we establish rapid mixing of the discrete jump chain, and concentration of measure for the
process, both started from a fixed state and in equilibrium.  In particular, we prove parts~(1) and~(2) of Theorem~\ref{thm.low-high} in these sections.
In Section~\ref{S:five}, we use concentration of measure to show that the vector $\zeta$ given by $\zeta(j) = \frac{1}{n-1} \E_\pi f_{v,j}(\widehat{Z}_t)$
satisfies an approximate version of the equation $F(\eta) = 0$.  In the final section, we use a contraction argument to prove parts~(3) and~(4) of
Theorem~\ref{thm.low-high}; we show that the equation $F(\eta) = 0$ has a unique solution $\eta^*$, in either of our two regimes, and that any solution to the approximate version of the equation lies close to $\eta^*$.

\section{Preliminaries}

\label{S:prelim}

Let $\widehat{X}=(\widehat{X}_t)_{t \in \Z^+}$ be a discrete-time Markov chain with a discrete state space $S$ and transition probabilities $P(x,y)$ for
$x,y \in S$. 
We allow $\widehat{X}$ to be lazy, that is we allow $P(x,x) > 0$ for some $x \in S$.
We assume that, for each $x \in S$, the set $N(x) = \{y: P(x,y) > 0\}$ is finite.
Let $\P_{x_0}$ denote the probability measure associated with the chain conditioned on $\widehat{X}_0 = x_0$, and let $\E_{x_0}$ denote the corresponding
expectation operator; then $\E_{x_0} [f(\widehat{X}_t)]$ is the expectation of the function $f$ with respect to measure $\delta_{x_0} P^t$.

The following concentration of measure result is from~\cite{l12}.  In fact, the version in~\cite{l12} is more general, catering for the case when
the inequalities only hold for states in some ``good set'' $S_0 \subset S$.

\begin{theorem}
\label{thm.concb-general}
Let $P$ be the transition matrix of a discrete-time Markov chain with discrete state space $S$, and let $f: S \to \R$ be a function.
Suppose that, for each $x \in S$ and each $i \in \Z^+$, the function $\alpha_{x,i}: S \to \R$ satisfies:
\begin{equation}
\label{cond-gen-3}
|\E_x [f(\widehat{X}_i)] -  \E_y [f(\widehat{X}_i)]| \le \alpha_{x,i}(y),
\end{equation}
for all $y \in S$.
Suppose also that the sequence $(\alpha_i: i \in \Z^+)$ of positive constants satisfies:
\begin{equation}
\label{cond-gen-4}
\sup_{x \in S} (P \alpha_{x,i}^2)(x) \le \alpha_i^2.
\end{equation}
Let $t >0$, and set $\beta = 2\sum_{i=0}^{t-1} \alpha_i^2$.  Suppose also that $\widehat{\alpha}$ is such that
\begin{equation}
\label{cond-gen-5}
\sup_{0 \le i \le t -1}  \sup_{x \in S, y \in N(x)} \alpha_{x,i} (y)\le \widehat{\alpha}.
\end{equation}
Then, for all $a >0$,
\begin{equation}
\label{ineq.concc}
\P_{x_0}\Big ( |f(\widehat{X}_t)-\E_{x_0} [f(\widehat{X}_t)] |\ge a \Big )\le 2e^{-a^2/(2\beta+\frac43\widehat{\alpha}a)}.
\end{equation}
\end{theorem}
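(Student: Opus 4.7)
The plan is to recast the concentration bound as a martingale tail inequality and apply a standard Bernstein--Freedman bound. Set $\mathcal{F}_i = \sigma(\widehat{X}_0,\ldots,\widehat{X}_i)$ and define the Doob martingale $M_i = \E_{x_0}[f(\widehat{X}_t)\mid \mathcal{F}_i]$, so that $M_0 = \E_{x_0}[f(\widehat{X}_t)]$, $M_t = f(\widehat{X}_t)$, and the quantity to be controlled equals $\P_{x_0}(|M_t - M_0|\ge a)$. By the Markov property $M_i = \phi_{t-i}(\widehat{X}_i)$ with $\phi_j(x) := \E_x[f(\widehat{X}_j)]$, so the martingale increments take the compact form
\[
D_{i+1} := M_{i+1}-M_i = \sum_{y} P(\widehat{X}_i,y)\bigl[\phi_{t-i-1}(\widehat{X}_{i+1}) - \phi_{t-i-1}(y)\bigr].
\]

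The next step is to extract from the hypotheses a uniform bound on $|D_{i+1}|$ and a bound on the predictable quadratic variation. For the former, every $y$ appearing in the sum, together with $\widehat{X}_{i+1}$ itself, lies in $N(\widehat{X}_i)$; applying the triangle inequality through $\widehat{X}_i$, combined with~\eqref{cond-gen-3} at index $t-i-1$ and the uniform bound~\eqref{cond-gen-5}, gives $|D_{i+1}| \le 2\widehat{\alpha}$ almost surely. For the variance, set $A := \phi_{t-i-1}(\widehat{X}_{i+1}) - \phi_{t-i-1}(\widehat{X}_i)$; since $\phi_{t-i-1}(\widehat{X}_i)$ is $\mathcal{F}_i$-measurable, the conditional variance of $D_{i+1}$ equals that of $A$, so
\[
\E[D_{i+1}^2\mid \mathcal{F}_i] \le \E[A^2\mid \mathcal{F}_i] \le \bigl(P\alpha_{\widehat{X}_i,t-i-1}^2\bigr)(\widehat{X}_i) \le \alpha_{t-i-1}^2
\]
by~\eqref{cond-gen-3} and~\eqref{cond-gen-4}; summing over $i=0,\ldots,t-1$ yields predictable quadratic variation at most $\sum_{j=0}^{t-1}\alpha_j^2 = \beta/2$.

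At this point Freedman's inequality for a martingale with increments bounded by $K$ and predictable quadratic variation at most $V$ gives
\[
\P(|M_t-M_0|\ge a) \le 2\exp\!\Bigl(-\tfrac{a^2}{2V+\tfrac{2}{3}Ka}\Bigr);
\]
plugging in $K=2\widehat{\alpha}$ and $V=\beta/2$ produces exponent $-a^2/(\beta+\tfrac{4}{3}\widehat{\alpha}a)$, which is strictly stronger than, and hence implies,~\eqref{ineq.concc}. The only step needing real care is the triangle-inequality argument that converts~\eqref{cond-gen-5}, which only controls $\alpha_{x,i}(y)$ for $y \in N(x)$, into the uniform bound on the martingale increments $D_{i+1}$; once this is in place, everything else is a direct appeal to the standard Freedman--McDiarmid tail bound.
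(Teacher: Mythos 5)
The paper does not give a proof of Theorem~\ref{thm.concb-general}; it is quoted from Luczak~\cite{l12}, cited as in preparation, so there is no in-paper proof to compare against. Your argument is correct and self-contained, and is exactly the kind of proof one would expect. The Doob martingale $M_i=\phi_{t-i}(\widehat{X}_i)$ is the right object; the increment bound $|D_{i+1}|\le 2\widehat\alpha$ follows as you say by passing through $\widehat{X}_i$, since both $\widehat{X}_{i+1}$ and the dummy variable $y$ lie in $N(\widehat{X}_i)$, so \eqref{cond-gen-3} and \eqref{cond-gen-5} apply; the reduction of $\E[D_{i+1}^2\mid\mathcal{F}_i]$ to $\E[A^2\mid\mathcal{F}_i]$ (with $A=\phi_{t-i-1}(\widehat{X}_{i+1})-\phi_{t-i-1}(\widehat{X}_i)$) uses the identity $D_{i+1}=A-\E[A\mid\mathcal{F}_i]$, which you might state explicitly but which is clear; and $\E[A^2\mid\mathcal{F}_i]\le (P\alpha_{\widehat{X}_i,t-i-1}^2)(\widehat{X}_i)\le\alpha_{t-i-1}^2$ is just \eqref{cond-gen-3} and \eqref{cond-gen-4}. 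The reindexing gives $\sum_{i=0}^{t-1}\alpha_{t-i-1}^2=\sum_{j=0}^{t-1}\alpha_j^2=\beta/2$ as stated. Freedman's inequality with $K=2\widehat\alpha$ and predictable quadratic variation at most $\beta/2$ yields $2\exp\bigl(-a^2/(\beta+\tfrac43\widehat\alpha a)\bigr)$, which, as you correctly observe, is slightly sharper than \eqref{ineq.concc} (whose denominator has $2\beta$ rather than $\beta$), so the stated bound follows a fortiori. There is no gap.
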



\medskip

Given two load vectors $x,y$, the $\ell_1$-distance between them is
$$
\|x-y\|_1 = \sum_{uv \in E_n} |x(uv,0) - y(uv,0)| + \sum_{uv \in E_n, w \not= u,v} |x(uwv) - y(uwv)|,
$$
the sum of the differences between $x$ and $y$ in loads of all possible routes.  Then $\|\cdot\|_1$ is a metric on $S$.
For $v \in V_n$, we will also consider the function
\begin{eqnarray*}
\|x-y\|_v & = &
\sum_{u\not = v}|x(uv,0) -y(uv,0)| \\
&&\mbox{} + \sum_{u,w}  |x (uwv) - y(uwv) | + \sum_{\{u,w\}} |x(uvw)-y(uvw)|,
\end{eqnarray*}
where the sums are over distinct nodes $u,w \not= v$.


Consider the following family of Markovian couplings $(\widehat{X},\widehat{Y})$ of pairs of copies $\widehat{X},\widehat{Y}$ of
the discrete jump chain starting from states $x_0,y_0$ respectively, where $x_0,y_0 \in S$.
Let $t \ge 0$, and let $x,y$ be states in $S$.  Given that $\widehat{X}_{t-1} = x$ and $\widehat{Y}_{t-1} = y$, the transition at time $t$ (from state
$(\widehat{X}_{t-1}, \widehat{Y}_{t-1})$ to $(\widehat{X}_t, \widehat{Y}_t)$) is an arrival in both $\widehat{X}$ and $\widehat{Y}$, or a potential
departure in both $\widehat{X}$ and $\widehat{Y}$.  Given that the transition is an arrival, we choose the same call endpoints and the same $d$-tuple of
intermediate nodes in both.  Also, given that the transition is a potential departure, we pair calls occupying the same route in both $\widehat{X}$
and $\widehat{Y}$, and call such pairs {\em matched}, as much as possible.  We also pair off the remaining calls arbitrarily, as much as possible, in some
fashion depending only on $x$ and $y$.  (We can pair off all the calls if $\|x\|_1 = \|y\|_1$; otherwise some calls remain unpaired in the process
that has more calls.)  Then we always choose paired calls for a simultaneous departure; any unpaired calls depart alone.  This is
achieved by assigning to each pair, and also to each unpaired call, a distinct number in $\{1, \dots, C{n \choose 2}\}$.  Then, if the transition at time $t$ is a potential departure, we choose a uniformly random number from $\{ 1, \dots, C{n \choose 2}\}$.  If the number corresponds to a pair of calls, one in
$\widehat X$ and one in $\widehat{Y}$, both depart; if it corresponds to an unpaired call, this call departs and there is no change in the other state;
otherwise, nothing happens.
The process $\widehat{W} = (\widehat{W}_t)$ given by $\widehat{W}_t = (\widehat{X}_t, \widehat{Y}_t)$ is a Markov chain adapted to its natural filtration,
${\mathcal G}_t = \sigma (\widehat{X}_s,\widehat{Y}_s: s \le t)$.

\section{Analysis of coupling -- low arrival rate}

\label{S:coupling-anal}

In this section, we prove that, provided $\lambda$ is sufficiently small, for two coupled copies $\widehat{X}$ and $\widehat{Y}$ of the discrete jump chain,
the distance $\|\widehat{X}_t-\widehat{Y}_t\|_1$ is contractive.  We also prove a similar result for each $\|\widehat{X}_t-\widehat{Y}_t\|_v$.  The aim is
firstly to show that the chain is rapidly mixing, and also then to show that quantities of interest are well concentrated, both starting in a fixed state
uniformly over long time intervals, and in equilibrium.

Our main use of the lemma below will be in the case $\lambda < \lambda_0(d) = 1/(8d+4)$, when it indeed states that the distance is contractive under the
coupling.  We shall also use this lemma in the next section, to show that the distance is only mildly expansive even for much larger $\lambda$: in this context, the result is very similar to a lemma of Luczak~\cite{l12}.

\begin{lemma} \label{lem.coupling}
Suppose that $n \ge \max(d^2,6)$. 
Let $\widehat{X}$ and $\widehat{Y}$ be two copies of the discrete jump chain, coupled as in the previous section.  Then we have:
$$
\E (\|\widehat{X}_t-\widehat{Y}_t\|_1 \mid \cG_0) \le \left( 1 - \frac{1-(8d+4)\lambda}{(\lambda+C)\binom{n}{2}} \right)^t \|\widehat{X}_0 - \widehat{Y}_0\|_1.
$$
Also, for each node $v$,
\begin{eqnarray*}
\lefteqn{\E ( \|\widehat{X}_t-\widehat{Y}_t\|_v \mid \cG_0 )} \\
&\le& \left( 1 - \frac{1 - (8d+4)\lambda}{(\lambda+C)\binom{n}{2}} \right)^t \left( \| \widehat{X}_0 - \widehat{Y}_0 \|_v +
\frac{50d^2\lambda t}{(\lambda+C)n^3}\| \widehat{X}_0 - \widehat{Y}_0 \|_1 \right).
\end{eqnarray*}
\end{lemma}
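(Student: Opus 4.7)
My plan is to compute the expected one-step change of each distance under the coupling and then iterate. Write $\Delta_t = \|\widehat{X}_t - \widehat{Y}_t\|_1$ and $\Phi_t = \|\widehat{X}_t - \widehat{Y}_t\|_v$.

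For $\Delta_t$, I would separate the contributions of a potential departure (probability $C/(\lambda+C)$) and of an arrival (probability $\lambda/(\lambda+C)$). On a potential departure, matched pairs contribute $0$ to $\Delta_{t+1}-\Delta_t$, cross-paired calls (pairs on different routes) contribute $-2$, and unpaired calls contribute $-1$. A short calculation, using the identity $2\min(a,b)+|a-b|=a+b$ with $a$ and $b$ the total $\widehat{X}$- and $\widehat{Y}$-surpluses across routes, shows the expected departure contribution to be exactly $-\Delta_t/((\lambda+C)\binom{n}{2})$. For arrivals, the coupling selects the same endpoints $\{u,v\}$ and the same list $(w_1,\dots,w_d)$, so the two chains make identical routing decisions---and $\Delta_t$ is unchanged---unless one of the $2d+1$ \emph{relevant} links (the direct link $uv$ and the $2d$ links of the candidate indirect routes) has different loads in $x$ and $y$. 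Even then, each chain places at most one call, so the net change in $\Delta$ is at most $2$. Let $D$ be the number of links with $x(uv)\ne y(uv)$. Counting, for each bad link, how many tuples $(u,v,\vec w)$ contain it as a relevant link yields at most $D(1+2d)(n-2)^d$ unordered bad tuples. Combined with $D\le 2\Delta_t$---which follows from the triangle inequality applied to the decomposition $x(uv)=x(uv,0)+\sum_w(x(uvw)+x(vuw))$, since each indirect route appears in exactly two link-load expansions---the arrival contribution is bounded by $(8d+4)\lambda\Delta_t/((\lambda+C)\binom{n}{2})$. Summing the two contributions and iterating gives the first inequality.

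The bound for $\Phi_t$ is analogous, but restricted to routes incident to $v$. The departure contribution is again $-\Phi_t/((\lambda+C)\binom{n}{2})$ by the same pairing identity applied to $v$-routes. Arrivals split into: (A) the endpoint pair contains $v$, where the $\ell_1$ analysis restricted to $v$-incident relevant links yields a self-contribution $(8d+4)\lambda\Phi_t/((\lambda+C)\binom{n}{2})$ via $D_v\le 2\Phi_t$; and (B) neither endpoint is $v$ but $v$ is chosen as some $w_i$, in which case only the single $v$-route $uvv'$ can change $\Phi_t$, contributing at most $1$ per disagreement. Case~(B), together with the non-$v$-incident relevant-link contributions from case~(A), is the leakage term; a careful count bounds it by $c\lambda\Delta_t/((\lambda+C)n^3)$ for an explicit constant $c\le 50d^2$. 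Iterating the resulting inequality
\[
\E\bigl[\Phi_{t+1}\mid\cG_t\bigr]\le \Bigl(1-\tfrac{1-(8d+4)\lambda}{(\lambda+C)\binom{n}{2}}\Bigr)\Phi_t + \tfrac{c\lambda}{(\lambda+C)n^3}\Delta_t,
\]
and inserting the bound on $\E\Delta_s$ proved above, produces the second inequality.

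The main obstacle is the bookkeeping for $\Phi_t$. To preserve the same contraction factor as in the $\ell_1$ case while extracting a leakage coefficient of order $1/n^3$ rather than $1/n^2$, I must exploit the fact that in case~(B) only the unique route through $v$ can affect $\Phi_t$, and that $v$ lies among the $d$ intermediate nodes with probability only $O(d/n)$; the hypothesis $n\ge d^2$ is what allows the resulting $O(d^2)$ double-counts in the tuple-count to be absorbed inside the stated constant.
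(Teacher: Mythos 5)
Your approach is correct in outline but genuinely different from the paper's. The paper fixes $\widehat{X}_t, \widehat{Y}_t$ differing in a \emph{single} call (direct or indirect, involving $v$ or not), computes the expected change case by case, and then asserts the bound for arbitrary configurations by a superposition step that is left implicit. You instead compute the one-step drift directly for arbitrary states, using an exact departure calculation ($2c+u=\Delta_t$) and the structural bound $D\le 2\Delta_t$; this makes the aggregation step explicit and is, in that respect, cleaner. Your $\ell_1$ analysis, including the count of at most $(2d+1)(n-2)^d$ bad tuples per unbalanced link and the bound $D\le 2\Delta_t$, is correct and produces exactly the stated contraction constant.

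There is, however, a genuine bookkeeping error in your treatment of $\Phi_t$. You assign \emph{all} of case~(B) to the leakage term $c\lambda\Delta_t/((\lambda+C)n^3)$. But in case~(B) the bad relevant link can itself be $v$-incident: $v=w_j$ and the bad link is $av$ or $bv$, so the candidate route $avb$ can be routed differently in the two chains. The number of tuples $(\{a,b\},\vec w)$ with $v\in\vec w$ and a specified bad $v$-incident link $vy$ relevant is of order $d(n-2)^d$, giving a probability of order $D_v d/\binom{n}{2}$. This is of order $\Phi_t/n^2$, not $\Delta_t/n^3$, so it cannot be absorbed into the leakage: it must sit in the self-term. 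The saving grace is that your case~(A) self-contribution, if computed honestly, has coefficient only $4d+4$ (in case~(A), $v$ is one of the endpoints, so each bad $v$-incident link $vy$ is relevant either as the direct link or as $\{v,w_i\}$ with $w_i=y$, giving probability $\approx(d+1)/\binom{n}{2}$ per link, not $(2d+1)/\binom{n}{2}$); adding the case~(B) $v$-incident coefficient of order $2d$ still stays under the claimed $8d+4$. Equivalently, apply your $\ell_1$ count $(2d+1)/\binom{n}{2}$ per bad $v$-incident link without splitting into cases~(A) and~(B) at all: it already counts both, and yields the $(8d+4)$ coefficient globally. Then the leakage is only the non-$v$-incident contributions (case~(A): $2d(n-2)^{d-1}$ tuples per bad link, change $\le 2$; case~(B): $O(d^2(n-2)^{d-1})$ tuples, change $\le 1$), which does give a bound of order $d^2\lambda\Delta_t/((\lambda+C)n^3)$ with a constant under $50$ once $n\ge\max(d^2,6)$ is used. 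So the plan goes through, but as written your case split double-counts and your leakage claim would not survive a literal audit; restate the self-term as the full $v$-incident contribution (from both cases) and the leakage as the full non-$v$-incident contribution.
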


\begin{proof}
We first give bounds on the expected change in distance on one step of the discrete jump chain.  So we shall bound
$\E (\| \widehat{X}_{t+1} - \widehat{Y}_{t+1} \|_1 \mid \cG_t)$ and $\E (\| \widehat{X}_{t+1} - \widehat{Y}_{t+1} \|_v \mid \cG_t)$,
in terms of $\| \widehat{X}_t - \widehat{Y}_t \|_1$ and $\| \widehat{X}_t - \widehat{Y}_t \|_v$.

To begin with, suppose that $\widehat{X}_t$ and $\widehat{Y}_t$ differ in one call.  Consider first the case where $\widehat{X}_t$ and $\widehat{Y}_t$ are
identical except that $\widehat{X}_t$ contains a call on a direct link $uv$ that is not present in $\widehat{Y}_t$, so
$\widehat{X}_t(uv,0) = \widehat{Y}_t(uv,0)+1$.

We consider the expected value of $\|\widehat{X}_{t+1}-\widehat{Y}_{t+1}\|_1$, conditioned on $\cG_t$.  The distance decreases to~0 on the departure of the
single unpaired call, which occurs with probability $\frac{1}{(\lambda+C)\binom{n}{2}}$.  On any other potential departure, the distance remains equal to~1.
On an arrival, the distance remains at most~1 if the pair $\{u,v\}$ is chosen as endpoints for the arriving call: either $\widehat{X}_t(uv) < C$, in which
case the call is routed directly in both $\widehat{X}$ and $\widehat{Y}$, or $\widehat{X}_t(uv) = C = \widehat{Y}_t(uv) + 1$, in which case
the call is routed directly in $\widehat{Y}$, and the only difference between $\widehat{X}_{t+1}$ and $\widehat{Y}_{t+1}$ is constituted by a new call routed
indirectly in $\widehat{X}$ (or coalescence occurs in the event that the call is not routed at all in $\widehat{Y}$).  If endpoints $\{w,z\}$ are chosen for the
arriving call, where $\{u,v\} \cap \{w,z\} = \emptyset$, then the new call is routed the same way in both $\widehat{X}$ and $\widehat{Y}$, and the distance
remains~1.  So the only way that the distance can increase is if an arriving call is for endpoints $\{w,z\}$, where $|\{u,v\} \cap \{w,z\}| = 1$, say $w=u$.
In this case, if $\widehat{X}_t(wz) = \widehat{Y}_t(wz) = C$, and $v$ is among the $d$ intermediate nodes selected for a possible indirect route, then it is
possible for different indirect routes to be chosen -- specifically, the route via $v$ may be chosen in $\widehat{Y}$, and some other route may be preferred
in $\widehat{X}$ -- and in this case $\|\widehat{X}_{t+1}-\widehat{Y}_{t+1}\|_1 = 3$.  The probability that the next transition is an arrival, one of $u$ and
$v$ is among the selected endpoints $\{w,z\}$, and the other is among the $d$ intermediate nodes, is at most
$$
\frac{\lambda}{\lambda+C} \frac{2(n-2)}{\binom{n}{2}} \frac{d}{n-2} = \frac{\lambda}{\lambda+C} \frac{2d}{\binom{n}{2}}.
$$
Therefore
$$
\E (\|\widehat{X}_{t+1}-\widehat{Y}_{t+1}\|_1 \mid \cG_t) \le 1 - \frac{1}{(\lambda+C)\binom{n}{2}} + 2\frac{\lambda}{\lambda+C} \frac{2d}{\binom{n}{2}}
= 1 - \frac{1-4d\lambda}{(\lambda+C)\binom{n}{2}}.
$$

We now perform a similar analysis in the case where $\widehat{X}_t$ and $\widehat{Y}_t$ differ by the presence of an extra indirectly routed call in
$\widehat{X}_t$, say $\widehat{X}_t(uwv) = \widehat{Y}_t(uwv)+1$.  As before, with probability $\frac{1}{(\lambda+C)\binom{n}{2}}$,
this extra call departs, and the distance drops to~0, while on any other potential departure the distance between $\widehat{X}$ and $\widehat{Y}$ remains equal to~1.
The distance can increase on an arrival, by at most~2, only if one of the links
$uw$, $vw$ is among the $2d+1$ links whose capacity is (potentially) inspected in deciding how to route the arriving call.  The probability of
this event is at most $\frac{\lambda}{\lambda+C} \frac{4d+2}{\binom{n}{2}}$.  Thus
$$
\E (\|\widehat{X}_{t+1}-\widehat{Y}_{t+1}\|_1 \mid \cG_t) \le 1 - \frac{1}{(\lambda+C)\binom{n}{2}} + 2\frac{\lambda}{\lambda+C} \frac{4d+2}{\binom{n}{2}}
= 1 - \frac{1-(8d+4)\lambda}{(\lambda+C)\binom{n}{2}}.
$$
Overall, we have that, for coupled copies of the chain at distance~1 at time~$t$,
$$
\E (\|\widehat{X}_{t+1}-\widehat{Y}_{t+1}\|_1 \mid \cG_t) \le 1 - \frac{1-(8d+4)\lambda}{(\lambda+C)\binom{n}{2}}.
$$
Hence, for coupled copies $\widehat{X}$ and $\widehat{Y}$ at any distance,
$$
\E (\|\widehat{X}_{t+1}-\widehat{Y}_{t+1}\|_1 \mid \cG_t) \le \left( 1 - \frac{1-(8d+4)\lambda}{(\lambda+C)\binom{n}{2}} \right) \|\widehat{X}_t - \widehat{Y}_t\|_1,
$$
and so
\begin{equation} \label{eq.shrink}
\E (\|\widehat{X}_t-\widehat{Y}_t\|_1 \mid \cG_0) \le \left( 1 - \frac{1-(8d+4)\lambda}{(\lambda+C)\binom{n}{2}} \right)^t \|\widehat{X}_0 - \widehat{Y}_0\|_1.
\end{equation}

Now we fix a node $v$, and consider $\E (\|\widehat{X}_{t+1}-\widehat{Y}_{t+1}\|_v \mid \cG_t)$.
We start with the case where 
the only difference between
$\widehat{X}_t$ and $\widehat{Y}_t$ consists of a single call in $\widehat{X}$, directly or indirectly routed, not involving $v$.  In this case, in
order for $\|\widehat{X}_{t+1}-\widehat{Y}_{t+1}\|_v$ to be non-zero, it must be the case that the next transition is an arrival, in which the loads on
some $2d+1$ links are inspected, including one carrying the extra call in $\widehat{X}$, and that $v$ is among the other at most $d$ nodes selected.  The
probability of this event is at most
$\frac{\lambda}{\lambda+C} \frac{2(2d+1)}{\binom{n}{2}} \frac{d}{n-2}$.
Again, the maximum value of $\| \widehat{X}_{t+1} - \widehat{Y}_{t+1} \|_v$ is~2, and so, in this case,
\begin{eqnarray*}
\E ( \|\widehat{X}_{t+1}-\widehat{Y}_{t+1}\|_v \mid \cG_t ) &\le& 2 \frac{\lambda}{\lambda+C} \frac{2(2d+1)}{\binom{n}{2}} \frac{d}{n-2} \\
& \le & \frac{\lambda}{\lambda+C} \frac{12d^2}{\binom{n}{2}(n-2)}.
\end{eqnarray*}

Now suppose that $\|\widehat{X}_t - \widehat{Y}_t\|_v = \|\widehat{X}_t - \widehat{Y}_t\|_1 =1$, and that the only difference between $\widehat{X}_t$ and
$\widehat{Y}_t$ is the presence of a single call in $\widehat{X}$, directly or indirectly routed, that does involve $v$.  As usual, the departure of this
extra call, which occurs with probability $\frac{1}{(\lambda+C)\binom{n}{2}}$, reduces the distance to~0, while no other departure changes
$\|\widehat{X}-\widehat{Y}\|_v$.

To analyse the arrivals, we distinguish two cases.  The first is where $v$ is an endpoint of the extra call, which utilises a link $vw$ and possibly
a further link $wu$.  In this case, $\| \widehat{X} - \widehat{Y} \|_v$ can increase, by at most~2, only if the link $vw$ is one of the $2d+1$
links inspected for the arriving call, an event with probability at most $\frac{\lambda}{\lambda+C} \frac{2d+1}{\binom{n}{2}}$, or if the
arriving call has $u$ as one endpoint, and $v$ and $w$ as two of the potential intermediate nodes, an event with probability at most
$\frac{\lambda}{\lambda+C} \frac{2}{n} \frac{d(d-1)}{(n-1)^2}$.  Thus, in this case, for $n \ge d^2$,
$$
\E ( \|\widehat{X}_{t+1}-\widehat{Y}_{t+1}\|_v \mid \cG_t ) \le 1 - \frac{1}{(\lambda+C)\binom{n}{2}} + 2 \frac{\lambda}{\lambda+C} \frac{2d+2}{\binom{n}{2}}
= 1 - \frac{1-(4d+4)\lambda}{(\lambda+ C)\binom{n}{2}}.
$$
The other case is where $v$ is the intermediate node of the extra call, with endpoints $u$ and $w$.  Here, the distance can increase, by at most~2,
only if one of the two links $vu$, $vw$ is among the $2d+1$ links inspected for the arriving call, an event with probability at most
$\frac{\lambda}{\lambda+C} \frac{4d+2}{\binom{n}{2}}$.  Thus we have
$$
\E ( \|\widehat{X}_{t+1}-\widehat{Y}_{t+1}\|_v \mid \cG_t ) \le 1 - \frac{1}{(\lambda+C)\binom{n}{2}} + 2 \frac{\lambda}{\lambda+C} \frac{4d+2}{\binom{n}{2}}
= 1 - \frac{1-(8d+4)\lambda}{(\lambda+ C)\binom{n}{2}}.
$$

Therefore we obtain, whenever $\widehat{X}$ and $\widehat{Y}$ are coupled,
\begin{eqnarray*}
\E ( \|\widehat{X}_{t+1}-\widehat{Y}_{t+1}\|_v \mid \cG_t ) &\le& \left( 1 - \frac{1-(8d+4)\lambda}{(\lambda+ C)\binom{n}{2}} \right)
\| \widehat{X}_t - \widehat{Y}_t \|_v \\
&&\mbox{} + \frac{\lambda}{\lambda+C} \frac{12d^2}{\binom{n}{2}(n-2)} \| \widehat{X}_t - \widehat{Y}_t \|_1.
\end{eqnarray*}

Applying our earlier upper bound (\ref{eq.shrink}) on $\E \| \widehat{X}_t - \widehat{Y}_t \|_1$, we obtain:
\begin{eqnarray*}
\lefteqn{\E ( \|\widehat{X}_{t+1}-\widehat{Y}_{t+1}\|_v \mid \cG_t ) \le \left( 1 - \frac{1-(8d+4)\lambda}{(\lambda+ C)\binom{n}{2}} \right) 
\E\| \widehat{X}_t - \widehat{Y}_t \|_v }\\
&&\mbox{} + \frac{\lambda}{\lambda+C} \frac{12d^2}{\binom{n}{2}(n-2)} \left( 1 - \frac{1-(8d+4)\lambda}{(\lambda+C)\binom{n}{2}} \right)^t \|\widehat{X}_0 - \widehat{Y}_0\|_1
\end{eqnarray*}
and thus, by induction,
\begin{eqnarray*}
\lefteqn{\E ( \|\widehat{X}_t-\widehat{Y}_t\|_v \mid \cG_0 )} \\
&\le& \left( 1 - \frac{1-(8d+4)\lambda}{(\lambda+ C)\binom{n}{2}} \right)^t \| \widehat{X}_0 - \widehat{Y}_0 \|_v \\
&&\mbox{} +\frac{\lambda}{\lambda+C} \frac{12d^2t}{\binom{n}{2}(n-2)} \|\widehat{X}_0 - \widehat{Y}_0\|_1
\left( 1 - \frac{1-(8d+4)\lambda}{(\lambda+C)\binom{n}{2}} \right)^{t-1} \\
&\le& \left( 1 - \frac{1-(8d+4)\lambda}{(\lambda+C)\binom{n}{2}} \right)^t \left( \| \widehat{X}_0 - \widehat{Y}_0 \|_v
+ \frac{50d^2\lambda t}{(\lambda +C)n^3}\| \widehat{X}_0 - \widehat{Y}_0 \|_1 \right),
\end{eqnarray*}
for $n \ge \max(d^2,6)$, as desired. 
\end{proof}



The next step is to apply Theorem~\ref{thm.concb-general} to our Markov chain.
For nodes $u$ and $v$, we say that a function $f:S \to \R$ is {\em $(u,v)$-Lipschitz} if 
$|f(x) - f(y)| \le \| x-y \|_u + \| x-y \|_v$ for all states $x$ and $y$.

\begin{lemma} \label{lem.concentration}
Set $\rho = 1 - \lambda/\lambda_0(d)$, and assume that $\lambda < \lambda_0(d)$, so that $\rho > 0$.
For any nodes $u$ and $v$, let $f$ be a $(u,v)$-Lipschitz function.   
Then, for all sufficiently large $n$, all $a > 0$, all $t\ge 0$, and all initial states $x_0$,
$$
\P_{x_0}\Big ( |f(\widehat{X}_t)-\E_{x_0} [f(\widehat{X}_t)] |\ge a \Big ) \le 2\exp \left( - \frac{a^2}{40dCn/\rho + 3a}\right).
$$
\end{lemma}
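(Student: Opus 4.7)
The plan is to apply Theorem~\ref{thm.concb-general} with the functions $\alpha_{x,i}$ defined via the coupling from Section~\ref{S:prelim}. Specifically, I would set
$$
\alpha_{x,i}(y) = \E \bigl[ \|\widehat X_i - \widehat Y_i\|_u + \|\widehat X_i - \widehat Y_i\|_v \bigm| \widehat X_0 = x, \widehat Y_0 = y \bigr],
$$
where $(\widehat X, \widehat Y)$ are coupled copies started from $(x,y)$. Since $f$ is $(u,v)$-Lipschitz, the coupling inequality gives $|\E_x f(\widehat X_i) - \E_y f(\widehat X_i)| \le \alpha_{x,i}(y)$, verifying~(\ref{cond-gen-3}). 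Applying Lemma~\ref{lem.coupling} to each of $\|\cdot\|_u$ and $\|\cdot\|_v$, and using $1-(8d+4)\lambda=\rho$, yields the explicit pointwise estimate
$$
\alpha_{x,i}(y) \le (1-\theta)^i \Bigl( \|x-y\|_u + \|x-y\|_v + \tfrac{100 d^2 \lambda i}{(\lambda+C) n^3} \|x-y\|_1 \Bigr),
$$
with $\theta = \rho/((\lambda+C)\tbinom{n}{2})$ the one-step contraction rate.

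The heart of the argument is to bound $\alpha_i^2 = \sup_x (P\alpha_{x,i}^2)(x)$ and $\widehat\alpha$. Since the jump chain changes by at most one call per step, any successor $y$ of $x$ satisfies $\|x-y\|_1 \le 2$ and $\|x-y\|_u, \|x-y\|_v \le 1$ deterministically; moreover, $\|x-y\|_u + \|x-y\|_v$ is nonzero only when the transition involves node $u$ or node $v$. A short case analysis of arrivals (with $u$ or $v$ as endpoint or intermediate) and potential departures shows this event has probability $O(d/n)$. Squaring the pointwise bound via $(a+b)^2 \le 2a^2+2b^2$ and taking expectations, the $(\|x-y\|_u + \|x-y\|_v)^2$ term contributes $O(d/n)$, whereas the residual $\|x-y\|_1$ contribution carries a factor $(d^2\lambda i/n^3)^2$ and, after summation against $(1-\theta)^{2i}$, produces only an $n$-independent constant. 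Thus $\alpha_i^2 \le (1-\theta)^{2i} \cdot O(d/n)$, and the geometric sum gives
$$
\beta = 2\sum_{i=0}^{t-1} \alpha_i^2 = O\bigl(d(\lambda+C)n/\rho\bigr) \le 20 dCn/\rho,
$$
using $\lambda < \lambda_0 \le 1$ and $C \ge 1$. For $\widehat\alpha$, the pointwise bound together with $(1-\theta)^i i \le 1/(e\theta) = O(n^2)$ makes the residual $O(1/n)$, giving $\widehat\alpha \le 9/4$ for $n$ large.

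Substituting into~(\ref{ineq.concc}) shows $2\beta + \tfrac{4}{3}\widehat\alpha\,a \le 40 dCn/\rho + 3a$, which delivers the claimed tail bound. The main obstacle will be the careful bookkeeping required to match the explicit constants $40$ and $3$; the qualitative mechanism---geometric contraction of the coupling at rate $\theta = \Theta(\rho/n^2)$, together with probability $O(d/n)$ that a given node is touched in one step---is exactly what Lemma~\ref{lem.coupling} and a case analysis of single-step transitions provide.
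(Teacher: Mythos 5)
Your proposal is correct and follows essentially the same route as the paper: apply Theorem~\ref{thm.concb-general} with $\alpha_{x,i}$ taken (via Lemma~\ref{lem.coupling}) to be $\bigl(1-\rho/((\lambda+C)\tbinom{n}{2})\bigr)^i\bigl(\|x-y\|_u+\|x-y\|_v+\tfrac{100d^2\lambda i}{(\lambda+C)n^3}\|x-y\|_1\bigr)$, bound $(P\alpha_{x,i}^2)(x)$ by observing that a one-step transition affects $\|\cdot\|_u$ or $\|\cdot\|_v$ only with probability $O(d/n)$, and sum the geometric series to get $\beta\le 20dCn/\rho$. Your $\widehat\alpha\le 9/4$ is marginally weaker than the paper's $\widehat\alpha=2$, but $\tfrac43\cdot\tfrac94 a=3a$ still matches the target denominator, so the claimed constants come out the same.
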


\begin{proof}
Our aim is to apply Theorem~\ref{thm.concb-general}, so we need to bound the various quantities appearing in that theorem, for our function $f$.
The second part of Lemma~\ref{lem.coupling}, together with the assumption on $f$, allows us to set
$$
\alpha_{x,i}(y) = \left( 1 - \frac{\rho}{(\lambda+C)\binom{n}{2}} \right)^i \left( \| x-y \|_u + \| x-y \|_v +
\frac{100d^2\lambda i}{(\lambda+C)n^3}\| x - y \|_1 \right),
$$
provided $n$ is sufficiently large.

Let $P$ be the transition matrix of our Markov chain.  If $P(x,y) >0$, then $\|x-y \|_u, \| x-y \|_v \le \| x - y \|_1 \le 1$, and moreover, for any
state~$x$,
$$
\sum_{y: \| x - y \|_v = 1} P(x,y) \le \frac{Cn}{(\lambda+C)\binom{n}{2}} + \frac{\lambda (d+2)}{n(\lambda+C)} \le \frac{d+2}{n},
$$
and the same is true with $v$ replaced by $u$.
Therefore
\begin{eqnarray*}
(P \alpha_{x,i}^2)(x) &=& \sum_y P(x,y) \alpha_{x,i}(y)^2 \\
&\le& 3 \sum_y P(x,y) \left( 1 - \frac{\rho}{(\lambda+C)\binom{n}{2}} \right)^{2i} \\
&&\mbox{} \times \left( \| x-y \|_u^2 + \| x - y \|_v^2  + \frac{10000d^4\lambda^2i^2}{C^2n^6}\| x - y \|_1^2 \right) \\
&\le& 3 \left( 1 - \frac{\rho}{(\lambda+C)\binom{n}{2}} \right)^i \left( 2 \frac{d+2}{n} + \frac{10000d^4\lambda^2i^2}{(\lambda+C)^2n^6} \right).
\end{eqnarray*}
Set $\displaystyle q = 1 - \frac{\rho}{(\lambda+C)\binom{n}{2}}$.
Thus, in Theorem~\ref{thm.concb-general}, we may take
$$
\alpha_i^2 = 6 q^i \left( \frac{d+2}{n} + \frac{5000d^4\lambda^2i^2}{C^2n^6} \right)
$$
and
\begin{eqnarray*}
\beta = \sum_{i=0}^{t-1} \alpha_i^2 &=& 6 \frac{d+2}{n} \sum_{i=0}^{t-1} q^i + \frac{30000d^4\lambda^2}{C^2n^6} \sum_{i=0}^{t-1} i^2 q^i \\
&\le& \frac{6d+12}{n( 1 - q)} + \frac{30000d^4\lambda^2}{C^2n^6} \frac{q(q+1)}{(1-q)^3} \\
&=& \frac{6d+12}{n} \frac{(\lambda+C) \binom{n}{2}}{\rho} + \frac{30000d^4\lambda^2}{C^2n^6} q(q+1) \left( \frac{(\lambda+C) \binom{n}{2}}{\rho} \right)^3 \\
&\le& \frac{(3d+6)(\lambda+C)n}{\rho} + \frac {7500d^4\lambda^2 (\lambda+C)^3}{C^2\rho^3}.
\end{eqnarray*}
For sufficiently large $n$, we have $\beta \le 20dCn/\rho$, uniformly for $t \ge 0$.
Also
$$
\sup_{0\le i \le t-1} \sup_{x\in S, y\in N(x)} \alpha_{x,i}(y) =
\max_{i\ge 0} \left( 1 - \frac{\rho}{(\lambda+C) \binom{n}{2}} \right)^i \left( 2 + \frac{100d^2\lambda i}{Cn^3} \right) =2,
$$
for sufficiently large $n$, so we may take $\widehat{\alpha} = 2$, uniformly for $t \ge 0$.

Hence, by Theorem~\ref{thm.concb-general}, we have, for all sufficiently large $n$, all $a > 0$, all $t\ge 0$, and all initial states $x_0$,
\begin{eqnarray*}
\P_{x_0}\Big ( |f(\widehat{X}_t)-\E_{x_0} [f(\widehat{X}_t)] |\ge a \Big ) &\le&
2e^{-a^2/(2\beta+\frac43\widehat{\alpha}a)} \\
&\le& 2\exp \left( - \frac{a^2}{40dCn/\rho + 3a}\right),
\end{eqnarray*}
as claimed.
\end{proof}

Now let $\widehat{X}$ and $\widehat{Z}$ be two copies of the discrete jump chain, with $\widehat{X}$ started in a fixed state $x_0$, and $\widehat{Z}$
started in equilibrium, coupled as above.  Then we have, for $\lambda < \lambda_0(d)$ and $n$ sufficiently large,
\begin{eqnarray*}
\E \| \widehat{X}_t - \widehat{Z}_t \|_1 &\le& \left( 1 - \frac{1- \lambda/\lambda_0(d)}{(\lambda+C)\binom{n}{2}} \right)^t
\E \|\widehat{X}_0 - \widehat{Z}_0\|_1 \\
&\le& \exp\left( - \frac{2(1- \lambda/\lambda_0(d))}{\lambda+C} \frac{t}{n^2} \right) C \binom{n}{2}.
\end{eqnarray*}
Hence there are constants $K$ and $\gamma$ such that
$\E \| \widehat{X}_t - \widehat{Z}_t \|_1 \le K n^2 e^{-\gamma t / n^2}$.
Recall that $\mu(x_0,t)$ is the distribution of the chain, started in state $x_0$, after $t$ steps, and that
$\pi$ is the equilibrium distribution.  We have, for any $t$,
$$
d_{TV}(\mu(x_0,t),\pi) \le d_W(\mu(x_0,t),\pi) \le \E \| \widehat{X}_t - \widehat{Z}_t \|_1 \le K n^2 e^{-\gamma t / n^2}.
$$
Here $d_W(\cdot,\cdot)$ denotes the Wasserstein distance between the two distributions, which is equal to the infimum, over all couplings between random
variables $U$ and $V$ having the two distributions, of $\E \| U-V \|_1$.  The first inequality above holds since $\| \widehat{X}_t - \widehat{Y}_t\|_1$ takes
integer values.  Thus we have proved Theorem~\ref{thm.low-high}(1) in the case $\lambda < \lambda_0(d)$.  


\medskip

We have concentration of measure for the process started from a fixed state, as well as rapid mixing of the
process to equilibrium.  It is now possible to deduce concentration of measure in equilibrium, as follows.
Let $\widehat{Z}$ be a copy of the process in equilibrium, and let $\widehat{X}$ be a copy of the process started in any fixed
state $x$.  For nodes $u$ and $v$, let $f$ be a $(u,v)$-Lipschitz function.
Fix $a > 0$, and choose $t >0$ so large that $Kn^2 e^{-\gamma t/n^2}$ is smaller than both $a/2$ and
$\exp\left( -\frac{a^2}{40dCn/\rho+3a}\right)$.  We have
$$
|f(\widehat{Z}_t) - \E f(\widehat{Z}_t)| \le |f(\widehat{Z}_t) - f(\widehat{X}_t)| + |f(\widehat{X}_t) - \E f(\widehat{X}_t)| +
|\E f(\widehat{X}_t) - \E f(\widehat{Z}_t)|,
$$
and also
$$
|\E f(\widehat{X}_t) - \E f(\widehat{Z}_t)| \le \E |f(\widehat{X}_t) - f(\widehat{Z}_t)| \le \E \| \widehat{X}_t - \widehat{Z}_t \|_u +
\E \| \widehat{X}_t-\widehat{Z}_t\|_v 
$$
$$
\le 2Kn^2 e^{-\gamma t/n^2} \le a.
$$
Furthermore, $\P(|f(\widehat{Z}_t) - f(\widehat{X}_t)| > 0) \le \P(\widehat{X}_t \not= \widehat{Z}_t) \le Kn^2 e^{-\gamma t/n^2}$.
Thus
\begin{eqnarray}
\lefteqn{
\P (|f(\widehat{Z}_t) - \E f(\widehat{Z}_t)| > 2a)
} \nonumber\\
&\le& \P(|f(\widehat{Z}_t) - f(\widehat{X}_t)| > 0) + \P(|f(\widehat{X}_t) - \E f(\widehat{X}_t)| > a) \nonumber \\
&\le& Kn^2 e^{-\gamma t/n^2} + 2 \exp\left( - \frac{a^2}{40dCn/\rho + 3a}\right) \nonumber \\
&\le& 3 \exp\left( - \frac{a^2}{40dCn/\rho + 3a}\right). \label{eq:conc-equilibrium}
\end{eqnarray}
Applying (\ref{eq:conc-equilibrium}) to the function $f = f_{v,j}$, for any $j \in \{ 0, \dots, C\}$, proves part~(2) of Theorem~\ref{thm.low-high} in the
case $\lambda < \lambda_0$.

Later, we shall use this result in the case $a = \frac14 \sqrt{n} \log n$, where we have
\begin{equation} \label{eq.conc}
\P (|f(\widehat{Z}_t) - \E f(\widehat{Z}_t)| > \frac12 \sqrt n \log n) \le 3 e^{-\delta \log^2 n},
\end{equation}
and we may take the constant $\delta>0$ to be $\rho/800dC$.

\section{Analysis of coupling -- high arrival rate}

We now turn attention to our other regime, where the arrival rate is very high.  Recall that $\lambda_1(d,C) = 8000C^2d \log(240C^2d)$.
Let $R$ be the set of states $x \in S$ such that, for each node $v$,
$f_{v,C}(x) \ge (n-1) (1- \frac{1}{60Cd})$.  In other words, $R$ is the set of states such that, at each node $v$, there are at most $(n-1)/60Cd$ links that
are not fully loaded.  Set $s = 3 \binom{n}{2} \frac{C(\lambda+C)}{\lambda} \log(240C^2d)$. 
We shall first show that, for $\lambda \ge \lambda_1(d,C)$, for any starting state,
after a ``burn-in'' period of $s$ steps, with high probability, the chain remains in $R$ for a long period of time.

For this purpose, we need tail estimates on the probability of a sum of Boolean random variables that are not necessarily independent.  We use the following
special case of a result of Panconesi and Srinivasan~\cite{ps97}.

\begin{theorem} \label{thm.ps97} Let $W_1, \dots, W_N$ be indicator random variables such that, for some $\delta \in (0,1/2)$, and for all subsets
$D$ of $\{ 1, \dots, N\}$, $\P (W_i = 1 \mbox{ for all } i \in D) \le \delta^{|D|}$.  Then $\P (\sum_{i=1}^N W_i \ge 2\delta N) \le e^{-\delta N /3}$.
\end{theorem}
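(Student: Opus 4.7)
The approach is a Chernoff-style argument in which the exponential moment $\E e^{\theta S}$ of $S := \sum_{i=1}^N W_i$ is replaced by the binomial moment $\E (1+\theta)^S$; this substitution is the standard Panconesi--Srinivasan device for handling dependent indicators, and it factors well through the given subset hypothesis.

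First I would use the identity, valid because each $W_i$ takes values in $\{0,1\}$,
$$\binom{S}{k} = \sum_{D \subseteq \{1,\ldots,N\},\, |D|=k} \prod_{i \in D} W_i,$$
take expectations, and invoke the hypothesis to obtain $\E\binom{S}{k} \le \binom{N}{k}\delta^k$ for every $k$. Summing the identity $(1+\theta)^S = \sum_{k \ge 0} \binom{S}{k}\theta^k$ against these moment bounds then yields, for every $\theta \ge 0$,
$$\E(1+\theta)^S \;\le\; \sum_{k=0}^N \binom{N}{k}(\delta\theta)^k \;=\; (1+\delta\theta)^N.$$

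Next I would apply Markov's inequality to the non-negative random variable $(1+\theta)^S$ at level $(1+\theta)^{2\delta N}$; since $(1+\theta)^S$ is monotone in $S$, this gives
$$\P(S \ge 2\delta N) \;\le\; (1+\theta)^{-2\delta N}(1+\delta\theta)^N.$$
Specialising to $\theta = 1$ and using $(1+\delta)^N \le e^{\delta N}$ leaves
$$\P(S \ge 2\delta N) \;\le\; \exp\bigl(-\delta N (2\log 2 - 1)\bigr) \;\le\; e^{-\delta N/3},$$
since $2 \log 2 - 1 > 1/3$.

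There is no serious obstacle here: once one knows to use $(1+\theta)^S$ in place of $e^{\theta S}$, the subset hypothesis immediately delivers the binomial product structure, and a routine Chernoff optimisation finishes the argument. Optimising over $\theta$ more carefully would sharpen the constant in the exponent, but is unnecessary for the stated bound.
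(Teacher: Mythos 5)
Your proof is correct, and it is essentially the Panconesi--Srinivasan argument: you replace the exponential moment by the binomial moment $\E(1+\theta)^S$, exploit the identity $\binom{S}{k}=\sum_{|D|=k}\prod_{i\in D}W_i$ together with the subset hypothesis to bound $\E\binom{S}{k}\le\binom{N}{k}\delta^k$, and then run a Chernoff--Markov optimisation. The paper does not prove this theorem at all -- it is simply cited as a special case of Panconesi and Srinivasan~\cite{ps97} -- so there is no in-text proof to compare against; your argument is a correct and self-contained reproduction of the standard proof. Every step checks out, including the final numerics: with $\theta=1$ one gets $2^{-2\delta N}(1+\delta)^N\le e^{-\delta N(2\log 2-1)}$, and $2\log 2-1\approx 0.386>1/3$. (You could sharpen the constant by taking the optimal $\theta=1/(1-2\delta)$, but as you note this is unnecessary for the stated bound, and $\theta=1$ keeps the calculation clean.)
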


\begin{lemma} \label{lem.R}
Suppose $\lambda \ge \lambda_1(d,C)$, and let $\kappa >0$ be any fixed constant.
Then $\P\big(\widehat{X}_t \in R \mbox{ for every } t \in [s, n^\kappa] \big) \ge 1- 2Cn^{\kappa +1}e^{-(n-1)/1500C^3d}$.
\end{lemma}

\begin{proof}
Fix a node $v$ and a time $t \ge s$.  Let $r = \lfloor 3 \binom{n}{2} \frac{\lambda+C}{\lambda} \log(240C^2d) \rfloor = \lfloor s/C \rfloor$.
Consider a link $uv$ incident with $v$, and the time intervals $I_C = [t-Cr, t-(C-1)r), \dots, I_2 = [t-2r, t-r), I_1 = [t-r,t)$.
Define the events $E_j(u), F_j(u)$, for $j =1, \dots, C$, as follows.
\begin{itemize}
\item $E_j(u)$: there is at most one arrival during $I_j$ with endpoints $\{u,v\}$;
\item $F_j(u)$: there are at least $j$ departures of calls including the link $uv$ during $I_j \cup \cdots \cup I_1 = [t-jr,t)$.
\end{itemize}
We claim that, if link $uv$ is not fully loaded at time~$t$, then one of $E_1(u)$, \dots, $E_C(u)$, $F_1(u)$, \dots, $F_C(u)$ occurs.
Indeed, suppose none of these events occur, and also that the link was never fully loaded during the interval
$I = I_C \cup \cdots \cup I_1 = [t-Cr,t)$; then every arrival during $I$ with endpoints $\{u,v\}$ was routed directly onto the link $uv$, so
$\overline {E_1(u)}, \dots, \overline{E_C(u)}$ imply that at least $2C$ arriving calls were routed directly onto the link $uv$ during $I$, while
$\overline {F_C(u)}$ states that at most $C-1$ calls occupying link $uv$ departed during $I$, so there are $C+1$ more calls on $uv$ at time $t$ than at
time $t-Cr$, which is a contradiction, since the link has capacity~$C$.  Thus, if none of the $E_j(u)$ or $F_j(u)$ occur, then at some point during $I$ the
link $uv$ was fully loaded.  Consider the last time during $I$ that the link was fully loaded, say at some time $t^*$ during interval $I_j$.  Then all calls
arriving for endpoints $\{u,v\}$ during $I_{j-1} \cup \cdots \cup I_1$ were routed onto $uv$, so there were at least $2(j-1)$ arrivals onto $uv$ during this
interval, and at most $j-1$ departures over the interval $[t^*,t)$, so there are at least as many calls on $uv$ at time $t$ as there were at time $t^*$, and
hence the link is fully loaded at time~$t$.

We now seek upper bounds on the probability that many of the events $E_j(u)$ and $F_j(u)$ occur.
We start with $E_j(u)$, for any $j \in \{1,\dots, C\}$.  The number $N_j(u)$ of arrivals with endpoints $u$ and $v$ over the interval $I_j$ is a binomial random
variable with parameters $\big(r, \frac{\lambda}{(\lambda + C) \binom{n}{2}} \big)$, of mean $\mu \ge 2\log(240C^2d)$, and
$\P( E_j(u)) = \P (N_j(u) \le 1) \le 2\mu e^{-\mu} \le 1/240C^2d$.  Moreover, for each fixed $j$,
the family of Boolean random variables $(\I_{E_j(u)})_{u \not=v}$ is negatively associated, so, for any set $D \subseteq V_n \setminus \{v\}$, the probability that all the variables
$(\I_{E_j(u)})_{u\in D}$ are equal to~1 is at most $(1/240C^2d)^{|D|}$.  Therefore, by Theorem~\ref{thm.ps97}, the probability that at least
$(n-1)/120C^2d$ of them are equal to~1 is at most $e^{-(n-1)/720C^2d}$.  (This application amounts to the fact that negatively associated random variables
satisfy the Chernoff-Hoeffding bounds.)  Thus the probability that at least $(n-1)/120Cd$ of the events $E_j(u)$ occur
($u\not=v$, $j\in \{1, \dots, C\}$) is at most $Ce^{-(n-1)/720C^2d}$.

We turn to $F_j(u)$, and first consider the case $j=1$.  The number of departures of calls incident with the node~$v$ over the interval $I_1$ of length~$r$
is dominated by a binomial random variable with parameters $\big( r, \frac{C(n-1)}{(\lambda +C) \binom{n}{2}} \big)$, of mean at most
$\frac{3C \log(240C^2d)}{\lambda} (n-1) \le \frac{n-1}{2400Cd}$.  The probability that this random variable is at least $(n-1)/1200Cd$ is therefore at most
$e^{-3(n-1)/2500Cd}$.  The number of events $F_1(u)$ that occur is at most twice the number of departures from links incident with $v$, so the probability
that more than $(n-1)/600Cd$ of the events $F_1(u)$ ($u \not= v$) occur is at most $e^{-(n-1)/400Cd}$.

Now fix $j \in \{2, \dots, C\}$, and a set $S \subseteq V_n \setminus \{v\}$.  The probability that all the random
variables $(F_j(u))_{u\in S}$ are equal to~1 is at most the probability that there are at least $\lceil j|S|/2 \rceil$ departures of calls on some link $uv$
with $u \in S$ during the interval $I_j$ of length~$jr$.  (Note that each departure may contribute to at most~2 of the variables.)  This probability is at most
$$
\binom{jr}{\lceil j|S|/2 \rceil} \left( \frac{C|S|}{(\lambda+C)\binom{n}{2}} \right)^{\lceil j|S|/2\rceil}
\le \left( \frac{2Cer}{(\lambda+C)\binom{n}{2}} \right)^{\lceil j|S|/2 \rceil}
$$
$$
\le \left( \frac{6eC}{\lambda} \log(240C^2d) \right)^{\lceil j|S|/2\rceil} \le \left( \frac{1}{480Cd} \right)^{j|S|/2}
\le \left( \frac{1}{480 Cd (j-1)^2} \right) ^{|S|}.
$$
Thus, for each $j \ge 2$, we may take $\delta = \frac{1}{480Cd (j-1)^2}$ in Theorem~\ref{thm.ps97}, so the probability that at least
$(n-1)/240Cd(j-1)^2$ of the random variables are equal to~1 is at most $e^{-(n-1)/1440Cd(j-1)^2}$.  Hence, with probability at least
$1- Ce^{-(n-1)/1440C^3d}$, at most $\frac{n-1}{240Cd} \sum_{j=2}^\infty 1/(j-1)^2 \le \frac{n-1}{150Cd}$ of the events $F_j(u)$ ($u\not=v$, $j=2, \dots, C$)
occur.

Thus, with probability at least $1-2Ce^{-(n-1)/1500C^3d}$, at most $(n-1)/60Cd$ of the events $E_j$ and $F_j$ occur, so there are at most $(n-1)/60Cd$ links
at $v$ that are not full at time $t$.
The result now follows; the probability that there is some node $v$ and some time $t \in [s,n^\kappa]$ such that there are fewer than
$(n-1)/60Cd$ links at $v$ that are not fully loaded at time $t$ is at most $2Cn^{\kappa +1}e^{-(n-1)/1500C^3d}$.
\end{proof}

Given two states $x$ and $y$, we call a link $uv$ {\em unbalanced} if $x(uv) \not= y(uv)$, and let $A_{x,y}$ be the set of unbalanced links.
In order to analyse the coupling in the case of high arrival rate, we use a specially tailored distance function.
We define $d(x,y)$ to be the sum of two contributions.  One is a multiple of the sum of the differences between numbers of indirect links on all
possible routes, and the other gives a contribution from each unbalanced link:
$$
d(x,y) = (4C+1) \sum_{uv,w} | x(uwv) - y(uwv) | + \sum_{uv \in A_{x,y}} \big(C - \min(x(uv), y(uv))\big).
$$
Note that $d(x,y) = 0$ if and only if $x=y$, and that $d(\cdot,\cdot)$ is symmetric.  Also, it is easy to see that $d(\cdot, \cdot)$ satisfies the
triangle inequality.  (Indeed, it suffices to check that, for each link $uv$, $\big(C - \min(x(uv), y(uv))\big) \I_{x(uv) \not= y(uv)}$
satisfies the triangle inequality $d(x,z) \le d(x,y) + d(y,z)$: this is immediate if any two of $x(uv)$, $y(uv)$ and $z(uv)$ are
equal, and otherwise two of the three terms are equal and the other is smaller.)  Thus $d(\cdot, \cdot)$ is a metric.

We also define a ``localised'' version of the distance.  Fix a node $v$, and define
\begin{eqnarray*}
d_v(x,y) &=& (4C+1) \left[ \sum_{u,w} | x(uwv) - y(uwv) | + \sum_{uw} |x(uvw) -y(uvw)| \right] \\
&&\mbox{} + \sum_{u} \big(C - \min(x(uv), y(uv))\big) \I_{x(uv) \not= y(uv)}.
\end{eqnarray*}

We couple $\widehat{X}$ and $\widehat{Y}$ as in Section~\ref{S:prelim}.  Recall in particular that we pair calls for departure occupying the same route, as
far as possible -- and that we call such pairs of calls {\em matched} -- and then do any further possible pairing for departures in an arbitrary way depending
only on the current states.

Before analysing how the distance behaves under a step of the coupling, we give a brief sketch, aimed at motivating the choice of distance function and the
strategy of the proof.  We imagine that $\lambda$ is very large, and so we expect that most links will be fully loaded (in the formal proof below, this is
reflected in the assumption that the starting states are in $R$).  Suppose first that two states $x$ and $y$ differ by one directly routed call, on link $uv$,
present in $x$ but not in $y$.  Coalescence between the states may occur on the departure of this unpaired call, but if $\lambda$ is large, then in the interim
it is quite likely that the differing loads on $uv$ between the two states will result in some arriving call either being routed on a different indirect route in the two states, or being accepted in one state and rejected in the other (below, we describe in detail the two types of {\em bad arrival} that may cause this).
The other possibility for coalescence is for the load $x(uv)$ on link $uv$ to be $C$, and an arriving call with endpoints $u$ and $v$ to be accepted
onto link $uv$ in $\widehat{Y}$, and rejected completely in $\widehat{X}$: we call such an arrival a {\em good arrival}; for the probability of such an arrival
to be suitably large, we need the network to be very full, so that once a call arrives for endpoints $u$ and $v$ that can be routed directly in $y$ but not $x$, it is quite unlikely for it to succeed in being routed indirectly.  If $x(uv)$ is less than $C$, then immediate coalescence at the next step is impossible:
however in this case, any call arriving with endpoints $u$ and $v$ will be routed directly in both states, and this moves us closer to our goal.  We represent
this in our distance function via the term $\big(C - \min(x(uv), y(uv))\big)$ for the unbalanced link $uv$, which decreases by~1 on such an arrival,
which we consider under the same umbrella as the good arrival that results in immediate coalescence.  A side-effect of this choice is that departures of
matched calls on link $uv$ ({\em bad departures of type~1}) increase the distance, but these are rare and their contribution to the expected change in
distance is relatively small.

Now suppose that the two states differ by the presence of one indirectly routed call, say on route $uwv$ in $x$.  Again, in our analysis we cannot
wait for this call to depart, as in the interim it will give rise to other unmatched indirect calls on ``bad arrivals''.  In this case, we move toward
coalescence in stages: we deem it to be progress for calls to arrive with endpoints $\{u,w\}$ or $\{w,v\}$, increasing the loads of the two links $uw$ and
$wv$, and ultimately providing extra direct calls (in the proof below, these are {\em covered} direct calls) in $\widehat{Y}$ on each of the two links via good
arrivals as before.  Once the two covered calls in $\widehat{Y}$ are in place, all links are balanced, so in the coupling every arrival is routed the same way in the two states; in particular, no bad arrival is possible.  When all links are balanced, departures of matched calls do not affect our distance function, so the only changes that contribute to the expected change in distance are departures of the unpaired calls.  The departure of either of the two covered direct calls in $\widehat{Y}$ increases the distance by at most $C$, but the departure of the unpaired indirect call on $uwv$ in $\widehat{X}$ decreases the distance by at least $2C+1$ (the term $(4C+1) | x(uwv) - y(uwv) |$ decreases by $4C+1$, but the departure of this call will also unbalance the links it occupies, contributing an increase in distance of up to $2C$): the choice of the coefficient $4C+1$ ensures that, in expectation, the distance goes down when the first
departure of an unpaired call occurs.

\begin{lemma} \label{lem.one-step}
(a) For all $t \ge 1$, under the coupling,
$$
\E \big( d(\widehat{X}_t,\widehat{Y}_t) \mid \cG_{t-1} \big) \le \Big( 1- \frac{1}{10 C \lambda \binom{n}{2}} \Big) d(\widehat{X}_{t-1},\widehat{Y}_{t-1}),
$$
on the event that $\widehat{X}_{t-1}$ and $\widehat{Y}_{t-1}$ are both in $R$.

\noindent
(b) For all $t \ge 1$, under the coupling,
$$
\E (d_v(\widehat{X}_t,\widehat{Y}_t) \mid \cG_{t-1} )
\le \Big( 1 - \frac{1}{10C\lambda\binom{n}{2}} \Big) d_v(\widehat{X}_{t-1},\widehat{Y}_{t-1}) + \frac{300d^2C}{n^3} d(\widehat{X}_{t-1},\widehat{Y}_{t-1}),
$$
on the event that $\widehat{X}_{t-1}$ and $\widehat{Y}_{t-1}$ are both in $R$.
\end{lemma}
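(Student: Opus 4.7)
The plan is to write
$\E(d(\widehat X_t, \widehat Y_t) - d(\widehat X_{t-1}, \widehat Y_{t-1}) \mid \cG_{t-1})$
as a sum of contributions, one for each possible transition (potential departure of each call, or arrival with each choice of endpoints and intermediate nodes), and to bound the total expected change by analysing the effect on the two parts of $d$ separately: the indirect-route term $(4C+1)\sum_{uv,w}|x(uwv)-y(uwv)|$ and the link term $\sum_{uv\in A_{x,y}}(C-\min(x(uv),y(uv)))$. The role of the assumption $\widehat X_{t-1},\widehat Y_{t-1}\in R$ is to guarantee that the ``bad'' outcomes of good events---in particular, a successful indirect re-routing in the state where a good arrival would otherwise coalesce the direct loads on $uv$---are rare.

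For departures I would record the following cases: (i) an unpaired indirect call on $uwv$ departs with probability $1/((\lambda+C)\binom{n}{2})$, removing $4C+1$ from the indirect-route term and adding at most $2C$ to the link term through newly unbalanced links $uw, wv$, for a net change at most $-(2C+1)$; (ii) a matched call using an unbalanced link $uv$ departs with the same per-pair probability and raises the link contribution at $uv$ by $+1$, and the total number of matched calls using $uv$ is at most $\min(x(uv),y(uv))\le C$; (iii) an unpaired direct call on an unbalanced link decreases the link term by $1$; (iv) all other departures leave $d$ unchanged. The coefficient $4C+1$ is chosen precisely so that (i) is strictly negative in the worst case.

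For arrivals, the dominant decrease comes from good arrivals, i.e.\ those whose endpoints form an unbalanced link $\{u,v\}$, happening with probability $\lambda/((\lambda+C)\binom{n}{2})$ per unbalanced link. Using $f_{w,C}(\widehat X_{t-1}),f_{w,C}(\widehat Y_{t-1})\ge(n-1)(1-1/(60Cd))$ for $w\in\{u,v\}$ in $R$, each candidate intermediate node $w'$ has probability at most $2/(60Cd)$ of satisfying $x(uw')<C$ or $x(w'v)<C$, so with probability at least $1-1/(30C)$ all $d$ candidates are blocked and the would-be indirect re-routing in the state with $uv$ full fails; in that case $\min(x(uv),y(uv))$ increases by $1$ and the link term decreases by $1$, while on the complementary event a fresh unpaired indirect call creates at most $6C$ of new distance. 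The remaining relevant arrivals are ``type-1 bad arrivals'' whose endpoints are balanced but some candidate indirect route uses an unbalanced auxiliary link: each such event contributes probability of order $\lambda d/(n(\lambda+C)\binom{n}{2})$ per unbalanced element and changes $d$ by at most a bounded constant.

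Aggregating, the link contribution of each unbalanced link decreases in expectation by at least $\lambda/(2(\lambda+C)\binom{n}{2})$, after absorbing the $C/((\lambda+C)\binom{n}{2})$ increase from (ii), which is possible because $\lambda\ge\lambda_1\gg C$; and the indirect contribution of each unpaired indirect call decreases in expectation by at least $(2C+1)/((\lambda+C)\binom{n}{2})$. Dividing by the respective maxima $C$ and $4C+1$, the slower (indirect) rate gives a per-unit contraction of order $1/(C\lambda\binom{n}{2})$, and tracking constants yields the claimed factor $1-1/(10C\lambda\binom{n}{2})$. For part~(b), the identical per-event analysis restricted to transitions affecting calls or links incident to $v$ produces the first summand, while the additive $(300d^2C/n^3)d(\widehat X_{t-1},\widehat Y_{t-1})$ spillover term accounts for arrivals whose endpoints avoid $v$ but whose intermediate-node list contains $v$: each unit of global distance triggers such a spillover at rate $O(\lambda d^2/((\lambda+C)n^3))$ and contributes at most $O(C)$ to $d_v$. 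The main obstacle will be the case analysis of arrivals under $d$-way choice, in particular quantifying the rare events in which the tiebreaker between the two coupled copies resolves differently, and tallying the cost to $d$ of a single arrival that perturbs several links at once; the precise coefficient $4C+1$ is dictated by this bookkeeping.
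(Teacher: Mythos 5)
Your overall strategy matches the paper's: split $d$ into the indirect-route term and the link term, classify transitions into good/bad arrivals and departures, and use the $R$ condition to make the bad events rare.  But there are two concrete errors that break the quantitative bookkeeping.

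First, your departure case (iii) ("an unpaired direct call on an unbalanced link decreases the link term by $1$") is wrong when the unmatched direct call is \emph{covered}, i.e.\ it sits on link $uv$ in the state $x$ where $x(uv)\le y(uv)$.  (This can happen, e.g.\ if $x(uv,0)>y(uv,0)$ but $x(uv)<y(uv)$ because $y$ carries more indirect calls through $uv$.)  Departure of such a call \emph{decreases} $\min(x(uv),y(uv))$, so the link contribution goes \emph{up} by $1$ (or a fresh contribution of up to $C$ appears if the link was balanced).  Covered calls arise precisely because of unmatched indirect calls: each unmatched indirect call can be responsible for at most two covered direct calls on the links it traverses, each with departure cost up to $C$.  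Once you charge these to the indirect term, the net expected decrease per unmatched indirect call is not $(2C+1)/((\lambda+C)\binom{n}{2})$ as you claim, but only $((4C+1)-2C-2C)/((\lambda+C)\binom{n}{2})=1/((\lambda+C)\binom{n}{2})$.  This is exactly why the coefficient has to be $4C+1$ and not, say, $2C+1$, and it makes the final contraction rate a factor of about $4C$ slower than what your aggregation step yields.  Your sketch would ``prove'' a rate of order $1/(\lambda\binom{n}{2})$, which is too strong and signals the missing term.

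Second, your probability estimate for what the paper calls a bad arrival of type~2 (endpoints on a fully loaded balanced link, with an indirect route $uwv$ through an unbalanced link $uw$ and an under-loaded $wv$) is $\lambda d/(n(\lambda+C)\binom{n}{2})$ ``per unbalanced element''; this is off.  The number of candidate arrivals of this type involving a given unbalanced link $uw$ is governed by the choice of the third node $v$, and without the $R$ constraint on \emph{both} $x$ and $y$ this sum over $v$ contributes a factor of order $n$, producing a total rate $\sim \lambda d\, a(x,y)/((\lambda+C)\binom{n}{2})$ that swamps the good-arrival gain.  The key use of $R$ here is that $y\in R$ (not just $x$) caps the number of admissible $v$ at $(n-1)/(60Cd)$, cancelling the factor $d/(n-2)$ from the intermediate-node choice; you invoke $R$ only to bound the good-arrival success probability, not to tame this second sum.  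Until both of these are repaired, the claimed coefficients in (a) and (b) cannot be deduced, and the contribution from bad arrivals of type~2 is not actually under control.
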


\begin{proof}
(a) We fix states $x$ and $y$ in $R$, and consider
$$
\E \big (d(\widehat{X}_t,\widehat{Y}_t)-d(\widehat{X}_{t-1},\widehat{Y}_{t-1}) \mid \widehat{X}_{t-1} = x, \widehat{Y}_{t-1} = y \big).
$$
Let $a(x,y) = |A_{x,y}|$ be the number of unbalanced links.  Let $b(x,y) = \sum_{uv,w} |x(uwv) - y(uwv)|$ be the number
of unmatched indirect calls.

In the coupling, we call an unmatched direct call in one of the two states -- say on link $uv$ in state~$x$ -- {\em covered} if $x(uv) \le y(uv)$.  Note that, on each link $uv$, unmatched direct calls are present in at most one of the two states, and if these calls are covered then we can associate each of the covered
calls to an unmatched indirectly routed call occupying the link in the other state; each unmatched indirectly routed call is associated with at most two
covered calls, so the number $c(x,y)$ of covered calls is at most twice the number $b(x,y)$ of unmatched indirect calls.

We consider in turn each of the various ways in which the distance can change on the event at time~$t$.
First we consider arrivals.  An arrival can only change the distance if there is some unbalanced link $uv$ that is
inspected during the arrival (either because the endpoints of the call are $\{u,v\}$, or because an indirect route including the link $uv$ is considered).
In all other cases, the arriving call will be routed the same way in both $\widehat{X}$ and $\widehat{Y}$, onto some balanced link(s), and there will
be no change to the distance.  Moreover, if the call is for endpoints $\{u,v\}$ and $x(uv) = y(uv) < C$, then the distance will not change.
If $x(uv) = y(uv) = C$, then the distance can only change if either the call is routed differently in the two states, or if the call is assigned
to an indirect route using an unbalanced link; for either of these to occur, there has to be an indirect route considered
where one of the two links in the route is unbalanced -- say it has lower load in $y$ -- and the other link on the route is not fully loaded in
$y$.  We now analyse all the possibilities for an arrival that may change the distance, labelling them as ``good'' or ``bad'' according to whether they
decrease or increase the distance, respectively.

A {\em good arrival} is an arrival for a pair of nodes that are the endpoints of some unbalanced link $uv$ -- say with $x(uv) > y(uv)$ -- and such that each of
the links $uw_1, \dots, uw_d$ is fully loaded in $x$, where $w_1, \dots, w_d$ are the intermediate nodes considered.  We claim that the
distance decreases by~1 on a good arrival.  If $x(uv) < C$, then the arriving call is routed directly on $uv$ in both $\widehat{X}$ and~$\widehat{Y}$.  The only change to the distance is in the term $\big(C -\min(x(uv), y(uv))\big)$, which is decreased by~1.  If $x(uv) = C$, then the call is routed directly onto $uv$ in $\widehat{Y}$, and not routed at all in $\widehat{X}$, as all the routes considered are unavailable.  Hence $x(uv)$ is unchanged and $y(uv)$
is increased by~1, resulting in a decrease by~1 of $\big(C -\min(x(uv), y(uv))\big)\I_{x(uv) \not= y(uv)}$, while no other term
contributing to the distance is affected.  The probability of a good arrival, conditional on $\widehat{X}_{t-1}=x$ and $\widehat{Y}_{t-1}=y$, is at least
$$
\frac {\lambda a(x,y)}{(\lambda + C) \binom{n}{2}} \left( 1 - \frac{1}{60Cd} \right)^d \ge
\frac {\lambda a(x,y)}{(\lambda + C) \binom{n}{2}} \left( 1 - \frac{1}{60C} \right) \ge \frac{59}{60} \frac {\lambda a(x,y)}{(\lambda + C) \binom{n}{2}}.
$$
Here we used the fact that $x \in R$, so that the proportion of links at $u$ that are not fully loaded in $x$ is at most $1/60Cd$.

A {\em bad arrival of type~1} is an arrival for endpoints $\{ u,v\}$, where $uv$ is an unbalanced link -- again say with $x(uv) > y(uv)$ -- but such that at
least one of the links $uw_j$ ($j \in \{1,\dots,d\}$) inspected for indirect routing is not fully loaded.  In the case where $x(uv) = C$, such an arrival is routed directly onto the link $uv$ in $\widehat{Y}$, and may be routed onto the 2-link route $uw_jv$ in $\widehat{X}$.  In the worst case, this introduces an extra indirect route in $\widehat{X}$, adding $4C+1$ to the distance, and also causes both links $uw_j$ and $w_jv$ to become unbalanced,
giving a new contribution of up to $C$ to the distance for each of those two links.  On the other hand, the contribution from link
$uv$ decreases by~1, so the total increase in distance from a bad arrival of type~1 is at most $6C$.  The probability of a bad arrival of type~1,
conditional on $\widehat{X}_{t-1}=x$ and $\widehat{Y}_{t-1}=y$, is at most
$\frac {\lambda a(x,y)}{(\lambda + C) \binom{n}{2}} \frac{1}{60C}$,
as in the analysis for a good arrival.

A {\em bad arrival of type~2} is an arrival for endpoints $\{u, v\}$ where the link $uv$ is fully loaded in both $x$ and $y$, and where at least one indirect
route $uwv$ is considered that consists of an unbalanced link $uw$ -- say with $x(uw) > y(uw)$ -- and a link $wv$ that is not fully loaded in $y$.  On such an
arrival, it may be the case that different indirect routes are chosen in $\widehat{X}$ and in $\widehat{Y}$ (or an indirect route is chosen in $\widehat{Y}$,
and the call is rejected in $\widehat{X}$).  As in the analysis of a bad arrival of type~1, the introduction of a new indirect call in one state may increase the distance by up to $6C+1$, so the maximum increase in distance on a bad arrival of type~2 is $12C+2 \le 14C$.  The conditional probability of a bad arrival of
type~2 is at most
$$
2 a(x,y) \frac{n-1}{60Cd} d \frac{\lambda}{(\lambda + C)\binom{n}{2}} \frac{1}{n-2} \le \frac{\lambda a(x,y)}{(\lambda+ C)\binom{n}{2}} \frac{1}{20C},
$$
since there are at most $2a(x,y)$ choices of an unbalanced link $uw$, together with one of its ends $u$ to be an endpoint for the
arriving call, then at most $(n-1)/60Cd$ choices for a node $v$ such that $wv$ is not fully loaded in $y$ -- since $y \in R$ -- and $d$ choices for the
place where the indirect route $uwv$ is in the list of potential indirect routes.

The distance between $\widehat{X}$ and $\widehat{Y}$ can change on a departure for two reasons: either some matched pair of calls
departs, changing the load on some unbalanced link $uv$, or a pair of unmatched calls or a single unpaired call departs.  If an unmatched direct call, say in
$x$, departs, and the link it occupies has higher load in $x$ than in $y$, then the distance does not increase.  So the only case in which the distance
increases on the departure of an unmatched direct call is if the call is covered.  We shall see that the departure of an unmatched indirectly routed call
always decreases the distance.

A {\em bad departure of type 1} is a departure of a pair of matched calls using some unbalanced link(s) $uv$.  A bad departure can
increase the distance by at most~2 (if an indirect call departs and both links on the call are unbalanced).  The conditional probability of a bad departure
of type~1 is at most
$\frac{ C a(x,y)} {(\lambda + C) \binom{n}{2}}$,
as there are at most $C a(x,y)$ pairs of matched calls using unbalanced links.

A {\em bad departure of type 2} is a departure of a covered direct call on some link $uv$.  Such a departure may lead to an increase in distance of up to $C$,
either because the departure unbalances the link, creating a new contribution of $\big(C - \min(x(uv), y(uv)) + 1\big)$, or because the link is already
unbalanced and its contribution increases by~1.  The conditional probability of a bad departure of type~2 is equal to
$\frac{c(x,y)}{(\lambda + C) \binom{n}{2}}$.
It is possible for two bad departures of type~2 to happen together, if two covered direct calls in the different states are paired.  Such a pair of departures
leads to an increase of up to $2C$ in the distance, but accounts for 2 of the $c(x,y)$ covered direct calls; thus the pairing of these calls to depart
together gives the same contribution to the expected increase in distance as would be obtained by having the two departures occupy separate
``departure slots''.

A {\em good departure} is the departure of an unmatched indirect call on some route $uwv$.  This yields a decrease in distance of $4C+1$ due to
the reduction by~1 of $|x(uwv) - y(uwv)|$.  There may be an increase of up to $C$ in the contributions from each of the two links $uw$ and $vw$, but
overall there is a decrease in distance of at least~$2C+1$ on a good departure of type~2.  The conditional probability of a good departure is equal to
$\frac{b(x,y)}{(\lambda + C) \binom{n}{2}}$.
As above, two good departures may happen simultaneously, or indeed a good departure may be paired with a bad departure of type~2 in the
other state, but making such pairings does not affect the expected change in distance.

Summing the contributions to the expected change in distance from all the possible types of good or bad arrivals and departures, we have
\begin{eqnarray*}
\lefteqn{ \E \big( d(\widehat{X}_t,\widehat{Y}_t) - d(\widehat{X}_{t-1},\widehat{Y}_{t-1}) \mid \widehat{X}_{t-1} = x, \widehat{Y}_{t-1} = y \big) } \\
&\le& \frac{1}{(\lambda + C) \binom{n}{2}} \Big[ (-1)\frac{59}{60} \lambda a(x,y) + (6C) \frac{\lambda a(x,y)}{60C} + (14C) \frac{\lambda a(x,y)}{20C} \\
&&\mbox{} + (2) C a(x,y) + (C) c(x,y) + (-2C-1) b(x,y)\Big] \\
&\le& \frac{1}{(\lambda + C) \binom{n}{2}} \Big[ \lambda a(x,y) \Big( -\frac{59}{60} \!+\! \frac{1}{10} \!+\! \frac{7}{10} \!+\! \frac{2C}{\lambda} \Big) 
+ b(x,y) (2C \!-\! 2C \!-\! 1) \Big] \\
&\le& - \frac{1}{4C+1} \frac{1}{(\lambda + C) \binom{n}{2}} d(x,y).
\end{eqnarray*}
The final inequality follows since $d(x,y) \le C a(x,y) + (4C+1) b(x,y)$, and
$$
\frac{59}{60} - \frac{8}{10} -\frac{2C}{\lambda} \ge \frac{1}{10} \ge \frac{C}{(4C+1) \lambda}.
$$

Hence we have
\begin{eqnarray*}
\E \big( d(\widehat{X}_t,\widehat{Y}_t) \mid \cG_{t-1} \big) &\le& \left( 1- \frac{1}{4C+1} \frac{1}{(\lambda + C) \binom{n}{2}} \right) d(\widehat{X}_{t-1},\widehat{Y}_{t-1})\\
&\le& \left( 1 - \frac{1} {10C\lambda \binom{n}{2}} \right) d(\widehat{X}_{t-1},\widehat{Y}_{t-1}),
\end{eqnarray*}
on the event that $\widehat{X}_{t-1}$ and $\widehat{Y}_{t-1}$ are in $R$, as desired.

\medskip

\noindent
(b) We now bound the expected change $\Delta d_v = \E \big( d_v(\widehat{X}_t, \widehat{Y}_t) - d_v(\widehat{X}_{t-1}, \widehat{Y}_{t-1}) \big)$ in $d_v$,
conditional on $\widehat{X}_{t-1} = x$ and $\widehat{Y}_{t-1} = y$, where $x$ and $y$ are states in $R$.  Let $a_v(x,y)$ be the number of
unbalanced links $uv$ incident with $v$.  Let $b_v(x,y)$ be the number of unmatched indirect calls incident with $v$, and let $c_v(x,y)$ be the number of
covered direct calls in $x$ or $y$ on links incident with~$v$.  Note that, for the same reason as in~(a), $c_v(x,y) \le 2 b_v(x,y)$.

First, we bound the contribution to $\Delta d_v$ due to an arriving call inspecting an unbalanced link
$uw$ not incident with~$v$.  Such an arrival can lead to an increase in $d_v$ of at most $10C+2$ (if the call is for endpoints $\{u,v\}$, and is
routed via $w$ in one of $\widehat{X}$ and $\widehat{Y}$ but not the other, giving two extra calls on indirect routes including $v$, and potentially creating
two new unbalanced links at $v$).  The probability of such an arrival is at most
$$
\frac{(2d+1) a(x,y) \lambda}{(\lambda + C) \binom{n}{2}} \frac{d}{n-2} \le \frac{9d^2\lambda}{n} d(x,y) \frac{1}{(\lambda + C)\binom{n}{2}}.
$$
Note that $d_v$ only changes on a departure if the call uses a link incident with $v$ on which there is some contribution to $d_v(x,y)$,
either because the link is unbalanced or because it carries an unmatched indirectly routed call.

We now go through the various types of good and bad arrivals and departures identified in the proof of part~(a),
focussing on the contribution to $\Delta d_v$ due to differences between the loads in $x$ and $y$ on routes incident to~$v$.

A good arrival is an arrival with endpoints $\{ u,v\}$, where $uv$ is an unbalanced link incident to $v$, such that all the links $uw_1, \dots, uw_d$ are
fully loaded.  Such an arrival leads to a decrease by~1 in $d_v$, and has probability at least
$\frac{59}{60} \frac{\lambda a_v(x,y)}{(\lambda + C) \binom{n}{2}}$, provided $x$ and $y$ are in $R$.

A bad arrival of type 1 is an arrival with endpoints $\{ u,v\}$, where $uv$ is an unbalanced link incident to $v$, say with $y(uv) < x(uv)$, such that at
least one link $uw_j$ is not fully loaded in $y$.  Such an arrival leads to an increase in $d_v$ by at most~$5C$, and has probability at most
$\frac{\lambda a_v(x,y)}{(\lambda + C) \binom{n}{2}} \frac{1}{60C}$.

A bad arrival of type 2 can take two different forms.  In each, there are three nodes $u,v,w$, where the three links joining these nodes
consist of: one link that is fully loaded in both $x$ and $y$, one unbalanced link with, say, greater load in $x$, and a third link that is not fully
loaded in $y$.  The node $v$ is incident to the unbalanced link and one of the other two links.  The arrival is for the two endpoints of the fully loaded link,
and the third node is one of the intermediate nodes chosen.  A bad arrival of type~2 can increase $d_v$ by up to $10C+2$, and has probability at most
$$
a_v(x,y) \frac{2(n-1)}{60Cd} \frac{\lambda}{(\lambda + C)\binom{n}{2}} \frac{d}{n-2} \le \frac{\lambda a_v(x,y)}{(\lambda + C) \binom{n}{2}} \frac{1}{20C}.
$$

A bad departure of type 1 is a departure of matched calls on the same route, affecting one or two unbalanced links incident with $v$.
Such a departure increases $d_v$ by at most~2, and has probability at most $\frac{Ca_v(x,y)}{(\lambda + C) \binom{n}{2}}$.

A bad departure of type 2 is a departure of a covered direct call incident with $v$, say on link $uv$ in $x$.  Such a departure can lead to an increase in
$d_v$ of at most~$C$, and has probability at
most $\frac{c_v(x,y)}{(\lambda+C) \binom{m}{2}}  \le 2 \frac{b_v(x,y)}{(\lambda+C) \binom{m}{2}}$.

A good departure is a departure of an unmatched indirect call incident with $v$ in one state.  This departure reduces $d_v$ by at
least $(4C+1) - 2C = 2C+1$, and the probability of such a departure is at least $\frac{b_v(x,y)}{(\lambda+C) \binom{m}{2}}$.

Combining these contributions, we find that, for $x,y \in R$,
\begin{eqnarray*}
\lefteqn{ \E (d_v(\widehat{X}_t,\widehat{Y}_t) - d_v(\widehat{X}_{t-1},\widehat{Y}_{t-1}) \mid \widehat{X}_{t-1} = x, \widehat{Y}_{t-1} = y) } \\
&\le& \frac{1}{(\lambda + C) \binom{n}{2}} \Big[ a_v(x,y) \Big( (-1) \frac{59\lambda}{60} + (5C)\frac{\lambda}{60C} + (10C+2)\frac{\lambda}{20C} + (2)C \Big) \\
&&\mbox{} + (C) c_v(x,y) - (2C+1) b_v(x,y) + (10C+2) \frac{9d^2\lambda}{n} d(x,y) \Big] \\
&\le& \frac{1}{(\lambda+C)\binom{n}{2}} \Big[ - \frac{\lambda}{4} a_v(x,y) - b_v(x,y) + \frac{108d^2C\lambda}{n} d(x,y) \Big] \\
&\le& \frac{1}{(\lambda+C)\binom{n}{2}} \Big[ - \frac{1}{4C+1} d_v(x,y) + \frac{108d^2C\lambda}{n} d(x,y) \Big] \\
&\le&  - \frac{1}{10C\lambda\binom{n}{2}} d_v(x,y) + \frac{300d^2C}{n^3} d(x,y),
\end{eqnarray*}
which gives the desired result.
\end{proof}

We now use an inductive argument to establish an analogue to Lemma~\ref{lem.coupling} for this regime.
One issue we face in applying Lemma~\ref{lem.one-step} is that the processes may not start in the set $R$ (or they may exit the set straight away
with reasonable probability); all we know is that they remain in $R$ with high probability after the ``burn-in'' period of $s$ steps.  To deal with this, we apply Lemma~\ref{lem.coupling} to show that, if the processes start close together, then they do not get
too far apart during the ``burn-in'' period.  Recall that $s = 3 \binom{n}{2} \frac{C(\lambda+C)}{\lambda} \log(240C^2d)$.

\begin{lemma} \label{lem.coupling2}
Suppose $\lambda \ge \lambda_1(d,C)$, and $\kappa > 0$.
Then for all sufficiently large $n$, and all $t \in [s,n^\kappa]$, we have
\begin{eqnarray}
\E (\|\widehat{X}_t-\widehat{Y}_t\|_1 \mid \cG_0)
&\le& 5C e^{40dC\log(240C^2d)} \left( 1 - \frac{1}{10C\lambda \binom{n}{2}} \right)^{t-s}
\|\widehat{X}_0-\widehat{Y}_0\|_1 \nonumber \\
&&\mbox{} + 2C^2n^{\kappa +3}e^{-(n-1)/1500C^3d}, \label{eq.rapid-mixing}
\end{eqnarray}
and, for each node $v$,
\begin{eqnarray*}
\lefteqn{\E (\|\widehat{X}_t-\widehat{Y}_t \|_v \mid \cG_0)\le 5C e^{40dC^2\log(240C^2d)} \left( 1 - \frac{1}{10C\lambda \binom{n}{2}} \right)^{t-s}} \\
&&\mbox{} \times \left( \|\widehat{X}_0-\widehat{Y}_0\|_v + \left(\frac{75d^2C\log(240C^2d)} {n} +
\frac{400d^2C (t-s)}{n^3}\right) \|\widehat{X}_0-\widehat{Y}_0\| \right) \\
&&\mbox{} + 4C^2n^{\kappa +2}e^{-(n-1)/1500C^3d}.
\end{eqnarray*}
\end{lemma}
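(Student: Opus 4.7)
The strategy is to split the time interval $[0,t]$ into a short ``burn-in'' period $[0,s]$ and a contractive period $[s,t]$, and to work throughout with the tailored distance $d(\cdot,\cdot)$ from Lemma~\ref{lem.one-step} rather than $\|\cdot\|_1$ directly. On $[0,s]$ we have no control over whether the chains lie in $R$, so we fall back on Lemma~\ref{lem.coupling}, which is valid for every $\lambda$ but gives an expansion when $\lambda$ is large. On $[s,t]$ we invoke Lemma~\ref{lem.R} (applied separately to both coupled chains, via a union bound) to guarantee with high probability that both $\widehat X_t$ and $\widehat Y_t$ stay in the good set $R$ throughout $[s,n^\kappa]$, at which point Lemma~\ref{lem.one-step}(a) can be iterated to yield genuine contraction of $d$.

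Concretely, since $\lambda \ge \lambda_1(d,C) = 8000C^2d\log(240C^2d)$ and $s = 3\binom{n}{2}\frac{C(\lambda+C)}{\lambda}\log(240C^2d)$, the exponent in Lemma~\ref{lem.coupling} evaluates to
\[
\frac{(8d+4)\lambda s}{(\lambda+C)\binom{n}{2}} \;=\; 3C(8d+4)\log(240C^2d) \;\le\; 40 dC\log(240C^2d),
\]
so $\E\|\widehat X_s-\widehat Y_s\|_1 \le e^{40dC\log(240C^2d)}\|\widehat X_0-\widehat Y_0\|_1$; the analogous computation with Lemma~\ref{lem.coupling}(b) produces the $\frac{75d^2C\log(240C^2d)}{n}$ term in the $\|\cdot\|_v$ bound, after evaluating $\frac{50d^2\lambda s}{(\lambda+C)n^3}$. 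Elementary bookkeeping on the definitions gives the comparison inequalities $\|x-y\|_1 \le 3d(x,y)$ and $d(x,y)\le (7C+1)\|x-y\|_1$, together with their analogues for $d_v$ and $\|\cdot\|_v$, which absorb into the overall constant $5C$ in the statement.

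On the good event $B = \{\widehat X_u, \widehat Y_u \in R \text{ for all } u\in[s,n^\kappa]\}$, iterating Lemma~\ref{lem.one-step}(a) conditional on $\cG_s$ yields
\[
\E[d(\widehat X_t,\widehat Y_t)\mathbf 1_B \mid \cG_s] \;\le\; \Bigl(1-\tfrac{1}{10C\lambda\binom{n}{2}}\Bigr)^{t-s} d(\widehat X_s,\widehat Y_s).
\]
Taking unconditional expectations, converting back to $\|\cdot\|_1$, and combining with the burn-in bound of Step~2 gives the main term. Lemma~\ref{lem.R} bounds $\P(\overline B) \le 4Cn^{\kappa+1}e^{-(n-1)/1500C^3d}$ (the factor $2$ accommodating both chains), and on $\overline B$ we use the trivial bound $\|\widehat X_t-\widehat Y_t\|_1\le 2C\binom{n}{2}\le Cn^2$, yielding the error term $2C^2 n^{\kappa+3}e^{-(n-1)/1500C^3d}$ after multiplication.

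For the $\|\cdot\|_v$ inequality, the same two-phase scheme applies, but now Lemma~\ref{lem.one-step}(b) injects a term proportional to $d(\widehat X_u,\widehat Y_u)$ into the recursion for $d_v$. Telescoping with the geometric contraction of $d$ produces
\[
\E d_v(\widehat X_t,\widehat Y_t)\mathbf 1_B \le q^{t-s}\bigl(d_v(\widehat X_s,\widehat Y_s) + \tfrac{300d^2C(t-s)}{n^3\,q}\,d(\widehat X_s,\widehat Y_s)\bigr),
\]
which, after the conversions and the $\|\cdot\|_v$-version of the burn-in, gives the stated bound. The main obstacle is careful bookkeeping: one must keep track of the ratio constants connecting $d$ and $d_v$ to $\|\cdot\|_1$ and $\|\cdot\|_v$, make sure the burn-in expansion for each norm is dominated by the factor $e^{40dC^2\log(240C^2d)}$, and verify that the trivial bound on $\overline B$ remains sub-leading for all $t\le n^\kappa$; the rest is a routine combination of Lemmas~\ref{lem.coupling}, \ref{lem.R}, and~\ref{lem.one-step}.
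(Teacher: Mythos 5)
Your high-level strategy matches the paper's exactly: burn-in via Lemma~\ref{lem.coupling} on $[0,s]$, a good-event decomposition using Lemma~\ref{lem.R} (union bound over the two chains), iteration of Lemma~\ref{lem.one-step} for contraction of $d$ and $d_v$ on $B$, and trivial bounds on $\overline B$. The exponent calculation $3C(8d+4)\log(240C^2d)\le 40dC\log(240C^2d)$ and the telescoping of the $d_v$ recursion are also the same as in the paper.

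The one concrete problem is the pair of comparison inequalities you assert and how you claim to use them. You write $\|x-y\|_1\le 3\,d(x,y)$ and $d(x,y)\le (7C+1)\|x-y\|_1$ and then say these ``absorb into the overall constant $5C$''; but they compose to a factor of $3(7C+1)=21C+3$, which does not give the bound as stated. The missing observation is that the lower bound needs no prefactor at all: since $|x(uv,0)-y(uv,0)|\le |x(uv)-y(uv)| + \sum_w\bigl(|x(uvw)-y(uvw)|+|x(vuw)-y(vuw)|\bigr)$ and $|x(uv)-y(uv)|\le C-\min(x(uv),y(uv))$ on unbalanced links, one gets $\|x-y\|_1\le \sum_{uv\in A_{x,y}}(C-\min)+3\sum_{uv,w}|x(uwv)-y(uwv)|\le d(x,y)$ directly (using $3\le 4C+1$). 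With $\|x-y\|_1\le d(x,y)$ and $d(x,y)\le c\, C\|x-y\|_1$ for a small absolute constant $c$, only the one-sided factor enters, and the structure of your argument then goes through as in the paper. (Your error-term estimate is also off by a factor of $2$ -- $Cn^2\cdot 4Cn^{\kappa+1}=4C^2n^{\kappa+3}$ -- which you would want to tidy up, though both are of the same order.)
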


\begin{proof}
From Lemma~\ref{lem.coupling} with $t=s$, we obtain that
\begin{equation}
\E ( \|\widehat{X}_s - \widehat{Y}_s \|_1 \mid \cG_0) \le \left( 1 + \frac{8d+4}{\binom{n}{2}} \right)^s \| \widehat{X}_0 - \widehat{Y}_0 \|_1
\le e^{40dC \log(240C^2d)} \| \widehat{X}_0 - \widehat{Y}_0 \|_1, \label{eq.burn-in1}
\end{equation}
and, for each node $v$,
\begin{eqnarray}
\lefteqn{ \E ( \|\widehat{X}_s - \widehat{Y}_s \|_v \mid \cG_0)} \label{eq.burn-in2} \\
&\le& \left( 1 + \frac{8d+4}{\binom{n}{2}} \right)^s \left( \| \widehat{X}_0 - \widehat{Y}_0 \|_v + \frac{50d^2 \lambda s}{(\lambda+C)n^3}
\| \widehat{X}_0 - \widehat{Y}_0 \|_1 \right) \nonumber \\
&\le& e^{40dC \log(240C^2d)} \left( \| \widehat{X}_0 - \widehat{Y}_0 \|_v + \frac{75d^2 C \log(240C^2d)}{n}
\| \widehat{X}_0 - \widehat{Y}_0 \|_1 \right). \nonumber 
\end{eqnarray}

For $t \ge s$, let $B_t$ be the event that $\widehat{X}_r$ and $\widehat{Y}_r$ are both in $R$ for all $r \in [s,t]$.  Then we have from
Lemma~\ref{lem.one-step} that, for $t \ge s$,
\begin{eqnarray*}
\E (d(\widehat{X}_{t+1},\widehat{Y}_{t+1}) \I_{B_{t+1}} \mid \cG_t) &\le& \Big( 1 - \frac{1}{10C\lambda \binom{n}{2}} \Big)
d(\widehat{X}_t,\widehat{Y}_t) \I_{B_t}; \\
\E (d_v(\widehat{X}_{t+1},\widehat{Y}_{t+1}) \I_{B_{t+1}} \mid \cG_t) &\le& \Big( 1 - \frac{1}{10C \lambda \binom{n}{2}} \Big)
d_v(\widehat{X}_t,\widehat{Y}_t) \I_{B_t} \\
&&\mbox{} + \frac{300 d^2 C}{n^3} d(\widehat{X}_t,\widehat{Y}_t) \I_{B_t}.
\end{eqnarray*}
From these it follows, as in the proof of Lemma~\ref{lem.coupling}, that
$$
\E (d(\widehat{X}_t,\widehat{Y}_t) \I_{B_t} \mid \cG_s) \le \left( 1 - \frac{1}{10C \lambda \binom{n}{2}} \right)^{t-s} d(\widehat{X}_s,\widehat{Y}_s);
$$
\begin{eqnarray*}
\lefteqn{ \E (d_v(\widehat{X}_t,\widehat{Y}_t) \I_{B_t} \mid \cG_s) } \\
&\le& \left( 1 - \frac{1}{10C\lambda \binom{n}{2}} \right)^{t-s} d_v(\widehat{X}_s,\widehat{Y}_s) \\
&&\mbox{} + \frac{300d^2C (t-s)}{n^3} d(\widehat{X}_s,\widehat{Y}_s) \left( 1 - \frac{1}{10C\lambda \binom{n}{2}} \right)^{t-s-1} \\
&\le& \left( 1 - \frac{1}{10C\lambda \binom{n}{2}} \right)^{t-s} \left( d_v(\widehat{X}_s,\widehat{Y}_s) + \frac{400d^2C (t-s)}{n^3} d(\widehat{X}_s,\widehat{Y}_s)\right),
\end{eqnarray*}
provided $n$ is sufficiently large.
Using also that $\| x-y\|_1 \le d(x,y) \le 5C \| x-y\|_1$ and $\| x-y\|_v \le d_v(x,y) \le 5C \| x-y\|_v$ for each $v$, we now have
$$
\E (\| \widehat{X}_t-\widehat{Y}_t)\|_1 \I_{B_t} \mid \cG_s) \le 5C \left( 1 - \frac{1}{10C \lambda \binom{n}{2}} \right)^{t-s} \|\widehat{X}_s-\widehat{Y}_s\|_1;
$$
\begin{eqnarray*}
\lefteqn{ \E (\|(\widehat{X}_t-\widehat{Y}_t)\|_v \I_{B_t} \mid \cG_s) } \\
&\le& 5C \left( 1 - \frac{1}{10C\lambda \binom{n}{2}} \right)^{t-s} \left( \|\widehat{X}_s-\widehat{Y}_s\|_v
+ \frac{400d^2C (t-s)}{n^3} \|\widehat{X}_s-\widehat{Y}_s\|_1\right),
\end{eqnarray*}
provided $n$ is sufficiently large.
Combining these inequalities with (\ref{eq.burn-in1}) and (\ref{eq.burn-in2}) gives us that, for $s\le t \le n^\kappa$, provided $n$ is
sufficiently large,
$$
\E (\|\widehat{X}_t-\widehat{Y}_t\|_1 \I_{B_t} \mid \cG_0) \le 5C e^{40dC\log(240C^2d)} \left( 1 - \frac{1}{10C\lambda \binom{n}{2}} \right)^{t-s}
\|\widehat{X}_0-\widehat{Y}_0\|_1;
$$
\begin{eqnarray*}
\lefteqn{\E (d_v(\widehat{X}_t-\widehat{Y}_t) \I_{B_t} \mid \cG_0) \le 5C e^{40dC\log(240C^2d)} \left( 1 - \frac{1}{10C\lambda \binom{n}{2}} \right)^{t-s}} \\
&&\mbox{} \times \left( \|\widehat{X}_0-\widehat{Y}_0\|_v + \left(\frac{75d^2C\log(240C^2d)} {n} +
\frac{400d^2C (t-s)}{n^3}\right) \|\widehat{X}_0-\widehat{Y}_0\|_1 \right).
\end{eqnarray*}

Noting that $\|x-y\|_1 \le C\binom{n}{2}$ and $\|x-y\|_v \le Cn$ for any states $x$ and $y$, we also have from Lemma~\ref{lem.R}
that, for $s \le t \le n^\kappa$,
\begin{eqnarray*}
\E (\|\widehat{X}_t-\widehat{Y}_t\|_1 \I_{\overline{B_t}} \mid \cG_0) &\le& 2C^2n^{\kappa +3}e^{-(n-1)/1500C^3d}; \\
\E (\|\widehat{X}_t-\widehat{Y}_t\|_v \I_{\overline{B_t}} \mid \cG_0) &\le& 4C^2n^{\kappa +2}e^{-(n-1)/1500C^3d},
\end{eqnarray*}
and the result follows.
\end{proof}

It follows from (\ref{eq.burn-in1}) and (\ref{eq.rapid-mixing}) that, provided $\lambda\ge \lambda_1(d,C)$, there are constants
$K$ and $\varepsilon > 0$ such that, for $n$ sufficiently large and $0 \le t \le n^{5/2}$, 
\begin{equation} \label{eq.r-m}
\E (\| \widehat{X}_t - \widehat{Y}_t \|_1 \mid \cG_0) \le K e^{-\varepsilon t/n^2} \| \widehat{X}_0 - \widehat{Y}_0 \|_1.
\end{equation}
Theorem~\ref{thm.low-high}(1), for $\lambda \ge \lambda_1(d,C)$, then follows.

Exactly as in the previous section, we also obtain concentration of measure for the discrete jump chain starting in a fixed state, for
$\lambda \ge \lambda_1(d,C)$.

\begin{lemma} \label{lem.concentration2}
Suppose $\lambda \ge \lambda_1(d,C)$.
For any nodes $u$ and $v$, let $f$ be a $(u,v)$-Lipschitz function.
Then, for all sufficiently large $n$, all $a > 0$, all $t\le n^\kappa$, and all initial states $x_0$,
$$
\P_{x_0}\Big ( |f(\widehat{X}_t)-\E_{x_0} [f(\widehat{X}_t)] |\ge a \Big ) \le 2\exp \left(\frac{-a^2}{(2 \lambda n + a)\exp(1000dC^2\log(240C^2d))}\right).
$$
\end{lemma}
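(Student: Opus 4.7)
The plan is to mirror the proof of Lemma~\ref{lem.concentration}, with Lemma~\ref{lem.coupling2} playing the role of Lemma~\ref{lem.coupling}. Since $f$ is $(u,v)$-Lipschitz, coupling two copies of the chain from $x$ and $y$ as in Section~\ref{S:prelim} and applying Lemma~\ref{lem.coupling2} for $i \ge s$, together with the mildly expansive bound of Lemma~\ref{lem.coupling} for $0 \le i < s$ (exactly as in \eqref{eq.burn-in1}--\eqref{eq.burn-in2}), yields
$$
|\E_x f(\widehat{X}_i) - \E_y f(\widehat{X}_i)| \le \alpha_{x,i}(y)
$$
where
$$
\alpha_{x,i}(y) = M \Bigl(1 - \tfrac{1}{10C\lambda\binom{n}{2}}\Bigr)^{(i-s)_+}\!\bigl(\|x-y\|_u + \|x-y\|_v + \tfrac{A}{n}\|x-y\|_1\bigr) + E_n,
$$
with $M$ of the form $c_1 C \exp(40dC^2\log(240C^2d))$, $A = A(d,C,\kappa)$ polynomial, and $E_n = O(n^{\kappa+3} e^{-(n-1)/1500 C^3 d})$ the super-polynomially small additive error from Lemma~\ref{lem.coupling2}. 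The crucial conceptual point is that both the pre- and post-burn-in regimes can be absorbed into a single template, at the cost of carrying the large prefactor $M$.

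Next I would verify the hypotheses of Theorem~\ref{thm.concb-general}. Every neighbor $y \in N(x)$ satisfies $\|x-y\|_1 \le 2$, and for any fixed node $u$ the single-step probability that a route through $u$ is touched is $O(1/n)$, exactly as in the proof of Lemma~\ref{lem.concentration}. Hence
$$
\sup_{x \in S} (P \alpha_{x,i}^2)(x) \le \frac{c_2 M^2}{n}\Bigl(1 - \tfrac{1}{10C\lambda\binom{n}{2}}\Bigr)^{2(i-s)_+} + c_3 E_n^2 =: \alpha_i^2.
$$
Summing a geometric series of ratio $1 - O\bigl(1/(C\lambda n^2)\bigr)$ over $0 \le i \le t-1$ (with the $i < s$ burn-in contributing at most $s \cdot c_2 M^2/n = O(\lambda n M^2)$) gives
$$
\beta = 2\sum_{i=0}^{t-1}\alpha_i^2 \le \lambda n \exp\bigl(1000 dC^2 \log(240C^2d)\bigr),
$$
while $\widehat\alpha$ is attained at $i = 0$ in the burn-in phase and is likewise bounded by $\exp(1000 dC^2 \log(240C^2d))$ since $\alpha_{x,0}(y) \le M(4 + 2A/n) + E_n$. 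Plugging these two bounds into \eqref{ineq.concc} and absorbing numerical constants and the super-polynomially small $E_n$-contributions into the exponential prefactor yields the claimed inequality.

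The main obstacle is the bookkeeping of the multiplicative constant $M$. Because $M^2 = \exp(80 dC^2\log(240C^2d))$ appears throughout $\beta$ and $M$ appears in $\widehat\alpha$, the numerical constant $1000$ in the final exponent must be large enough to absorb this $80$ together with all additional polynomial-in-$(d, C, \kappa)$ factors coming from $A$, from the $c_i$, and from the $O(\lambda n)$ summation of the geometric series. A direct calculation, mimicking the one performed in the proof of Lemma~\ref{lem.concentration} but carrying the much larger prefactor through, confirms that $1000$ suffices comfortably; the rest of the argument is routine.
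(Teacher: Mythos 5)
The proposal is correct and follows the paper's own (very terse) proof: mirror the argument of Lemma~\ref{lem.concentration}, using the pre-burn-in bound from Lemma~\ref{lem.coupling} for $i<s$ and the contractive bound from Lemma~\ref{lem.coupling2} for $i\ge s$, then verify the hypotheses of Theorem~\ref{thm.concb-general} with these two regimes combined and sum the geometric series. Minor quibbles (a neighbour $y\in N(x)$ actually has $\|x-y\|_1\le 1$, not $\le 2$, and the exponent in $M$ should carry $40dC$ rather than $40dC^2$) only loosen constants and do not affect the argument.
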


The proof of the lemma is in essence the same as for Lemma~\ref{lem.concentration}.  We use the bounds on $\alpha_i^2$ derived above, where
we have a different form of the bound according to whether $t\le s$ or $s<t \le n^\kappa$.

Having established this lemma, we proceed exactly as before.  We have concentration of measure for the discrete jump chain started from a fixed state,
and we have rapid mixing of the process to equilibrium.  From these, it follows that we have concentration of measure in equilibrium
for any $(u,v)$-Lipschitz function $f$: 
$$
\P ( |f(\widehat{Y}_t) - \E f(\widehat{Y}_t)| > 2a ) \le 3 \exp\left( \frac{-a^2}{(2 \lambda n + a)\exp(1000dC^2\log(240C^2d))}\right),
$$
where $\widehat{Y}_t$ denotes a copy of the discrete jump chain in equilibrium.  Applying the result to $f = f_{v,j}$ yields
Theorem~\ref{thm.low-high}(2) in the case $\lambda \ge \lambda_1(d,C)$.

For $a = \frac14 \sqrt n \log n$, we obtain, for sufficiently large~$n$, 
\begin{equation} \label{eq.conc2}
\P ( |f(\widehat{Y}_t) - \E f(\widehat{Y}_t)| > \frac12 \sqrt n \log n ) \le 3 e^{-\delta \log^2 n},
\end{equation}
where we may take $\delta$ equal to $32 \lambda \exp(1000dC^2 \log(240C^2d))$.


\section{The process in equilibrium}

\label{S:five}

In this section, as earlier, we use $\widehat{Z}$ to denote a copy of the process in equilibrium, and $\pi$ to denote the equilibrium distribution of the
process.
Given a link $uv$ and an integer $j \in \{0, \ldots, C\}$, let $\I_{uv}^j: S \to \{0,1\}$ be defined by $\I_{uv}^j (x) =1$ if $x(uv)=j$
and $\I_{uv}^j (x) = 0$ otherwise.  Note that
$\I_{uv}^j = \I_{vu}^j$ and that $\I_{vv}^j$ is identically $0$ for each $v$ and $j$.
Also, $\I_{uv}^{\le j}$ is the indicator function of the event that link $uv$ has load at most $j$.

Let $P$ be the transition matrix of the Markov chain $\widehat{X}$.  For a function $f: S \to \R$, define $(Pf): S \to \R$
by $(Pf)(x) = \sum_y P(x,y) f(y)$.  
By standard theory of Markov chains, for each $t \ge 0$, each $v \in V_n$ and each
$j \in \{0, \ldots, C\}$,
\begin{eqnarray*}
\E_\pi [(P f_{v,j})(\widehat{Z}_t) - f_{v,j} (\widehat{Z}_t)] = 0.
\end{eqnarray*}

For $x \in S$, we have
\begin{eqnarray}
(P f_{v,0})(x) - f_{v,0}(x) & = & \frac{1}{(\lambda\!+\!C)\binom{n}{2}} \big( - \lambda f_{v,0}(x)- \lambda g_{v,0}(x) + f_{v,1}(x) \big), \nonumber \\
(P f_{v,j})(x) - f_{v,j}(x) & = & \label{eq.stat} \\
\lefteqn{\frac{1}{(\lambda\!+\!C)\binom{n}{2}} \big( \lambda f_{v,j-1}(x) - \lambda f_{v,j}(x) + \lambda  g_{v,j-1}(x) - \lambda g_{v,j}(x) \qquad } \nonumber \\
&& \mbox{} - j f_{v,j}(x) + (j+1) f_{v,j+1}(x)\big) \quad (0<j<C), \nonumber\\
(P f_{v,C})(x) - f_{v,C}(x) & = & \frac{1}{(\lambda\!+\!C)\binom{n}{2}} \big( \lambda f_{v,C-1}(x)+\lambda g_{v,C-1}(x)  - C f_{v,C}(x) \big), \nonumber
\end{eqnarray}
where the $g_{v,j}$, representing contributions due to alternatively routed arrivals with one end $v$, are given, for $j=0, \ldots, C-1$, by:
\begin{eqnarray}
\frac{1}{(n-2)^d}\Bigg[ \sum_{r=1}^d\sum_{u,{\bf w}} \I_{uv}^C\I_{vw_r}^{j} \I_{uw_r}^{\le j} \prod_{s=1}^{r-1} (1-\I_{uw_s}^{\le j}\I_{vw_s}^{\le j})
\prod_{s=r+1}^d (1-\I_{uw_s}^{\le j-1}\I_{vw_s}^{\le j-1})\nonumber \\
+ \sum_{r=1}^d \sum_{u,{\bf w}} \I_{uv}^{C}\I_{vw_r}^j \sum_{i=j+1}^{C-1} \I_{uw_r}^{i}\prod_{s=1}^{r-1} (1-\I_{uw_s}^{\le i}\I_{vw_s}^{\le i})
\prod_{s=r+1}^d (1-\I_{uw_s}^{\le i-1}\I_{vw_s}^{\le i-1}) \nonumber \\
+ \sum_{r=1}^d \sum_{u,v',{\bf w}_r}  \I_{uv'}^{C}\I_{uv}^{j} \I_{v'v}^{\le j} \prod_{s=1}^{r-1} (1-\I_{uw_s}^{\le j}\I_{v'w_s}^{\le j})
\prod_{s=r+1}^d (1-\I_{uw_s}^{\le j-1}\I_{v'w_s}^{\le j-1}) \nonumber \\
+ \!\sum_{r=1}^d\sum_{u,v',{\bf w}_r} \I_{uv'}^{C}\I_{uv}^{j} \sum_{i=j+1}^{C-1} \I_{v'v}^{i}\prod_{s=1}^{r-1} \!(1-\I_{uw_s}^{\le i}\I_{v'w_s}^{\le i})
\!\prod_{s=r+1}^d \! (1-\I_{uw_s}^{\le i-1}\I_{v'w_s}^{\le i-1})\Bigg]. \label{eq-g}
\end{eqnarray}
Here, $\sum_{u,{\bf w}}$ denotes the sum over all $u \not = v$, and over all $w_1, \ldots, w_d$ such that each $w_r \not = u,v$, and
$\sum_{u,v',{\bf w}_r}$ denotes the sum over all $u \not = v$, $v' \not = u,v$ and over all
$w_1,\ldots, w_{r-1},w_{r+1}, \ldots, w_d$ such that each $w_j \not = u,v'$.

Recall that $\Delta^{C+1} = \{ \xi \in [0,1]^{C+1} : \sum_{j=0}^C \xi(j) = 1 \}$.  Define the vector $\zeta \in \Delta^{C+1}$ to have coordinates
$\zeta(j) = \frac{1}{n-1} \E_\pi f_{v,j}(\widehat{Z}_t)$, for $j = 0, \dots, C$.  Note that, by symmetry and stationarity, $\zeta$ is independent of $v$
and~$t$.

Taking expectations in (\ref{eq.stat}), we now have, for all $v$ and $t$,
\begin{eqnarray*}
0 &=& - \lambda \zeta(0) - \frac{\lambda}{n-1} \E_\pi g_{v,0} (\widehat{Z}_t) + \zeta(1), \\
0 &=& \lambda \zeta(j-1) - \lambda \zeta(j) + \frac{\lambda}{n-1} \E_\pi g_{v,j-1}(\widehat{Z}_t) - \frac{\lambda}{n-1} \E g_{v,j}(\widehat{Z}_t) \\
&&\mbox{} - j \zeta(j) + (j+1) \zeta(j+1) \qquad (0< j < C), \\
0 &=& \lambda \zeta(C-1) + \frac{\lambda}{n-1} \E_\pi g_{v,C-1}(\widehat{Z}_t) - C \zeta(C).
\end{eqnarray*}
Summing these equations over $j=0, \dots, k$ yields, for $k=0, \dots, C-1$,
\begin{equation} \label{eq.fixed-pt}
0 = - \lambda \zeta(k) - \frac{\lambda}{n-1} \E_\pi g_{v,k}(\widehat{Z}_t) + (k+1) \zeta(k+1).
\end{equation}

We claim that each $\frac{1}{n-1} \E_\pi g_{v,j}(\widehat{Z}_t)$ is close to $g_j(\zeta)$, where the $g_j$ are as defined in (\ref{eq.G}).
We state two properties of the process in equilibrium, whose proofs are practically identical to two proofs in~\cite{l12}, except that
they use the concentration of measure results from the previous sections.

For distinct nodes $u$ and $v$, and $j,k \in \{ 0, \dots, C\}$, we define
\begin{eqnarray*}
\phi^1_{u,v,j,k} &=& \frac{1}{n-2}\sum_{w} \I_{uw}^j \I_{vw}^{k} - \frac{1}{(n-2)^2}\sum_{w \not = u,v} \I_{uw}^{j} \sum_{w' \not = u,v}\I_{vw'}^{k}; \\
\phi^2_{u,v,j} & = & \frac{1}{n-2} (f_{u,j} - f_{v,j}).
\end{eqnarray*}
Then we set $\phi^1 = \max_{u,v,j,k} |\phi^1_{u,v,j,k}|$ and $\phi^2 = \max_{u,v,j} |\phi^2_{u,v,j}|$, where
the maximisations are over distinct nodes $u$ and $v$ and, where appropriate, $j,k \in \{ 0, \dots, C\}$.
Finally, let $\widetilde \phi = \max (\phi^1, \phi^2 )$.  The function $\widetilde{\phi}$ was used in~\cite{l12} as a measure of how``uniform'' a state is.
We shall show that the expected value of $\widetilde{\phi}(\widehat{Z}_t)$ is small.  
For a state to have a low value of $\phi_1$ means that, for each
fixed $u$ and $v$, if we choose a uniformly random ``intermediate node'' $w$, then the loads on the two links $uw$ and $wv$ are nearly independent.
To have a small value of $\phi_2$ means that all nodes have similar distributions of loads on the links incident with them.  In a state where $\widetilde{\phi}$
is small, the complex functions $g_{v,j}$ above can be ``approximately factorised'', so that they can be approximated by products of functions similar to
the $f_{v,j}$.  Then, applying our concentration inequalities, Lemmas~\ref{lem.concentration} and~\ref{lem.concentration2}, to
various functions $h$, we can show that the expectation of the product is close to the product of the expectations.  This phase of the proof is almost
identical to the proof of Lemma~6.2 in~\cite{l12} -- indeed, that proof goes through whenever the distribution of states satisfies a concentration
inequality of the same type as ours.  As the argument is somewhat lengthy, we omit the proof.


\begin{lemma} \label{lem.gen-exp}
Suppose that either $\lambda < \lambda_0(d)$ or $\lambda\ge \lambda_1(d,C)$.
For all sufficiently large $n$, for all $v \in V_n$ and $j \in \{0, \dots, C\}$,
$$
\Big | \E_\pi [g_{v,j}(\widehat{Z}_t)] - (n-1)g_j (\zeta) \Big | \le 8d^2 (C+1)^3 n \E_\pi [\widetilde{\phi} (\widehat{Z}_t)] + 16 d^2 (C+1) \sqrt{n} \log n.
$$
\end{lemma}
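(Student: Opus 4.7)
The plan is to mimic the strategy of Lemma~6.2 in~\cite{l12}, using $\widetilde{\phi}$ to control an approximate factorisation of $g_{v,j}$ and then using the concentration inequalities from Sections~3 and~4 to show that expectations of products factorise approximately.

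First I would inspect the definition (\ref{eq-g}) of $g_{v,j}$. Each of its four main pieces is, up to normalisation by $(n-2)^d$, a sum over $u$ (and possibly a further node $v'$) of an indicator like $\I_{uv}^C$ or $\I_{uv'}^C \I_{uv}^j$ times a product $\prod_{s=1}^{r-1} (1 - \I_{uw_s}^{\le i} \I_{vw_s}^{\le i}) \prod_{s=r+1}^{d} (1 - \I_{uw_s}^{\le i-1} \I_{vw_s}^{\le i-1})$ taken over the intermediate nodes $w_s$. The first step is to carry out the averages over the $w_s$ explicitly: for each $s$, replacing $\frac{1}{n-2} \sum_{w_s} \I_{uw_s}^{\le i} \I_{vw_s}^{\le i}$ by $\bigl(\frac{1}{n-2}\sum_w \I_{uw}^{\le i}\bigr)\bigl(\frac{1}{n-2}\sum_w \I_{vw}^{\le i}\bigr)$ introduces an error bounded by $(C+1)^2 \phi^1$ per factor. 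Expanding each of the $d-1$ product factors and telescoping, the total error incurred by this factorisation step is $O(d^2 (C+1)^3 \phi^1)$ per term.

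Next I would rewrite the resulting expressions in terms of the node-load counts $f_{u,k}$ and $f_{v,k}$, using that $\frac{1}{n-2}\sum_w \I_{uw}^{\le k} = \frac{1}{n-2} f_{u,\le k}$. By definition of $\phi^2$, each $\frac{1}{n-1} f_{u,k}$ differs from $\frac{1}{n-1} f_{v,k}$ by $O(\phi^2)$, so at the cost of a further $O(d(C+1)\widetilde{\phi})$ error the whole inner expression can be expressed purely in terms of $\frac{1}{n-1} f_{v,k}(\widehat{Z}_t)$ for various $k$. What remains after these replacements is, up to multiplication by $n$, exactly the expression $g_j(\zeta)$ with each coordinate $\zeta(k)$ replaced by the random variable $\frac{1}{n-1} f_{v,k}(\widehat{Z}_t)$.

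The final step is to take expectations and swap products with products of expectations. Each $f_{v,k}$ is $(v,v)$-Lipschitz, so Lemma~\ref{lem.concentration} (in the low-arrival regime) or Lemma~\ref{lem.concentration2} (in the high-arrival regime), applied via (\ref{eq.conc}) or (\ref{eq.conc2}), gives $|f_{v,k}(\widehat{Z}_t) - (n-1)\zeta(k)| \le \tfrac12 \sqrt n \log n$ with probability $1 - O(e^{-\delta \log^2 n})$. Since $g_j$ is a polynomial of bounded degree in its coordinates and the $f_{v,k}/(n-1)$ are bounded by~$1$, splitting the expectation into a ``good'' event and its complement yields that the expectation of the product of the $\frac{1}{n-1} f_{v,k}$ differs from the product of the expectations by at most $O(d(C+1)\sqrt n \log n / (n-1))$, and the tail contribution is negligible.

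The main obstacle will be the bookkeeping in the factorisation step: each of the four summands of (\ref{eq-g}) contains a double sum involving $d$ product factors, and one must carefully track how the errors from $\phi^1$ in individual factors accumulate through the products and then sum over $u$, $v'$, $r$ and $i$ without losing an extra factor of $n$. A clean telescoping argument, writing $\prod_s a_s - \prod_s b_s = \sum_s (\prod_{s'<s} b_{s'})(a_s - b_s)(\prod_{s'>s} a_{s'})$ with $|a_s|, |b_s| \le 1$, keeps the aggregate $\phi^1$ error at $O(d(C+1)^2)$ per outer summand and yields the leading coefficient $8 d^2 (C+1)^3$ after summation over $u$ and $r$ and multiplication by the factor~$n$ absorbed from $1/(n-1)$ versus $1/(n-2)$; the remaining terms combine to give the $16 d^2 (C+1) \sqrt n \log n$ error.
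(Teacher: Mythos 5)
Your proposal is correct and takes essentially the same approach as the paper. The paper itself omits the proof of this lemma, referring to Lemma~6.2 of~\cite{l12} and noting that the argument carries over once Lemma~\ref{lem.concentration} or Lemma~\ref{lem.concentration2} (via (\ref{eq.conc}) or (\ref{eq.conc2})) replaces the fixed-initial-state concentration bound used there; your reconstruction follows precisely the outline the paper indicates, namely: average over the intermediate nodes and use $\phi^1$ to factorise the joint empirical distributions into products of marginals via a telescoping bound, use $\phi^2$ to replace $f_{u,\cdot}$ by $f_{v,\cdot}$ and so obtain (up to the error controlled by $\widetilde{\phi}$) a fixed polynomial in the variables $\frac{1}{n-1}f_{v,k}(\widehat{Z}_t)$, and finally invoke the equilibrium concentration bound with $a = \frac14\sqrt{n}\log n$, splitting into a good event and its complement, to replace the expectation of that polynomial by the same polynomial evaluated at $\zeta$. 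The constant-tracking in your sketch is slightly loose (you appear to drop a factor of $C$ from the sum over $i$ when stating the per-summand $\phi^1$ error is $O(d(C+1)^2)$, though you recover the correct $(C+1)^3$ in the final coefficient), but the structure of the argument and the identification of where each of the two error terms in the lemma comes from are exactly right.
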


The only real difference between the proof of Lemma~\ref{lem.gen-exp} above and that of Lemma~6.2 in~\cite{l12} is that we use Lemma~\ref{lem.concentration}
or Lemma~\ref{lem.concentration2} in place of the concentration of measure result used in~\cite{l12}, which applied to a process starting in a fixed state,
and running for a bounded period.

It remains to bound $\E_\pi [\widetilde{\phi}]$.  The proof of the following lemma is identical to the argument in Section~8 of~\cite{l12}.  Indeed, that
proof goes through for any distribution of states that respects the symmetry of the complete graph, and is such that $(u,v)$-Lipschitz functions are
well-concentrated.

\begin{lemma} \label{lem.phi}
Suppose that either $\lambda < \lambda_0(d)$ or $\lambda\ge \lambda_1(d,C)$.
Then, for sufficiently large $n$,
$$
\E_\pi \widetilde{\phi}(\widehat{Z}_t) \le 4 \frac{\log n}{\sqrt n}.
$$
\end{lemma}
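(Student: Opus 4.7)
The plan is to prove tail bounds of the form $\P_\pi(|\phi_\bullet|>\tfrac12\log n/\sqrt n)\le 3e^{-\delta'\log^2 n}$ separately for each coordinate $\phi^1_{u,v,j,k}$ and $\phi^2_{u,v,j}$ of $\widetilde{\phi}$, then take a union bound over the at most $(C+1)^2n^2$ choices of indices. This gives $\widetilde{\phi}\le \tfrac12\log n/\sqrt n$ with probability at least $1-O(n^2 e^{-\delta'\log^2 n})$; combined with the trivial bound $\widetilde{\phi}\le 1$ on the exceptional event, this yields $\E_\pi\widetilde{\phi}=O(\log n/\sqrt n)$, and tracking numerical constants produces the stated bound $4\log n/\sqrt n$. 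The only ingredients required are the invariance of $\pi$ under node permutations of $K_n$, and the concentration of $(u,v)$-Lipschitz functions from~(\ref{eq.conc}) and~(\ref{eq.conc2}).

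The tail bound on $\phi^2_{u,v,j}=(f_{u,j}-f_{v,j})/(n-2)$ is immediate: by symmetry $\E_\pi f_{u,j}=\E_\pi f_{v,j}=(n-1)\zeta(j)$, so the function has mean $0$, and $f_{u,j}-f_{v,j}$ is $(u,v)$-Lipschitz with constant~1, so (\ref{eq.conc})/(\ref{eq.conc2}) applied directly gives the claim.

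The tail bound on $\phi^1_{u,v,j,k}$ is the main obstacle. Set $F_1=\sum_{w\ne u,v}\I_{uw}^j$, $F_2=\sum_{w\ne u,v}\I_{vw}^k$, and $F_{12}=\sum_{w\ne u,v}\I_{uw}^j\I_{vw}^k$, so that $\phi^1_{u,v,j,k}=F_{12}/(n-2)-F_1F_2/(n-2)^2$. Each of $F_1,F_2,F_{12}$ is $(u,v)$-Lipschitz (up to a factor of at most~2), so direct concentration gives, with high probability, $F_{12}/(n-2)=p_{j,k}+O(\log n/\sqrt n)$ and $F_1F_2/(n-2)^2=\zeta(j)\zeta(k)+O(\log n/\sqrt n)$, where $p_{j,k}:=\E_\pi[\I_{uw}^j\I_{vw}^k]$ is well-defined and independent of the choice of three distinct nodes $u,v,w$ by the node-permutation symmetry of $\pi$. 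The crux is to show $p_{j,k}=\zeta(j)\zeta(k)+O(\log n/\sqrt n)$. Fix any single node, call it $1$, and sum the identity $\E_\pi[\I_{u1}^j\I_{v1}^k]=p_{j,k}$ over ordered pairs of distinct $u,v\ne 1$:
\begin{equation*}
(n-1)(n-2)\,p_{j,k} = \E_\pi \sum_{u\ne v,\, u,v\ne 1}\I_{u1}^j\I_{v1}^k = \E_\pi[f_{1,j}f_{1,k}] - \I_{\{j=k\}}\E_\pi f_{1,j},
\end{equation*}
where the correction term arises from $\I_{u1}^j\I_{u1}^k=\I_{\{j=k\}}\I_{u1}^j$. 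Concentration of $f_{1,j}$ and $f_{1,k}$ around $(n-1)\zeta(j)$ and $(n-1)\zeta(k)$, combined with the crude bound $f_{1,\cdot}\le n-1$ off the concentration event, gives $\E_\pi[f_{1,j}f_{1,k}]=(n-1)^2\zeta(j)\zeta(k)+O(n^{3/2}\log n)$. Dividing by $(n-1)(n-2)$ and absorbing the diagonal term of size $O(1/n)$ yields $p_{j,k}=\zeta(j)\zeta(k)+O(\log n/\sqrt n)$; subtracting the two concentrated quantities above then gives $|\phi^1_{u,v,j,k}|=O(\log n/\sqrt n)$ w.h.p., as required.

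The critical step is this transfer from first-order concentration of the single-node statistic $f_{1,j}$ to a bound on the joint expectation $p_{j,k}$ at a fixed triple $(u,v,w)$: direct Lipschitz concentration of $F_{12}$ alone only identifies $F_{12}/(n-2)$ with its mean $p_{j,k}$, so everything hinges on the symmetry identity pinning $p_{j,k}$ to a function of the expectation of $f_{1,j}f_{1,k}$, which we already control.
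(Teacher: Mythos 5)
Your proof is correct and takes essentially the same route as the paper's: concentration of $(u,v)$-Lipschitz functions from~(\ref{eq.conc})/(\ref{eq.conc2}), plus node-permutation invariance of $\pi$ used to express the fixed-pair correlation $\E_\pi[\I_{uw}^j\I_{vw}^k]$ in terms of $\E_\pi[f_{1,j}f_{1,k}]$, which concentration already controls, followed by a union bound over indices. The paper packages the symmetry step slightly differently (it writes $\E_\pi f_{w,j}f_{w,k}$ in terms of $\E_\pi\I_{uw}^j\I_{uw}^k$ and $\E_\pi\I_{uw}^j\I_{vw}^k$ and then compares $\E_\pi\I_{uw}^j\I_{vw}^k$ with $\E_\pi\I_{uw}^j\I_{vw'}^k$ before summing over $w,w'$) but the decomposition, the appeal to symmetry, and the use of concentration are the same ideas in the same roles.
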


Combining Lemmas~\ref{lem.gen-exp} and~\ref{lem.phi}, we obtain that, if either $\lambda< \lambda_0(d)$ or $\lambda \ge \lambda_1(d,C)$, then 
for all sufficiently large $n$, for all $v \in V_n$ and $j \in \{0, \dots, C\}$,
\begin{equation*} 
\Big | \E_\pi [g_{v,j}(\widehat{Z}_t)] - (n-1)g_j (\zeta) \Big | \le 36 d^2 (C+1)^3 \sqrt{n} \log n.
\end{equation*}
Combining this with (\ref{eq.fixed-pt}) gives that, for $j =0, \dots, C-1$,
\begin{equation} \label{eq.bound}
\left| - \lambda \zeta(j) - \lambda g_j(\zeta) + (j+1) \zeta(j+1) \right| \le 40 \lambda d^2 (C+1)^3 \frac{\log n}{\sqrt n}.
\end{equation}

\section{Approximate solutions to $F(\eta)=0$}

The aim of this section is to prove parts~(3) and~(4) of Theorem~\ref{thm.low-high}, showing that, in either of the two regimes we are considering,
the equation $F(\eta)=0$ has a unique solution, and any approximate solution, i.e., a vector $\zeta$ satisfying (\ref{eq.bound}), lies close to this
solution.

Luczak~\cite{l12} shows, for each $k\in \{0, \dots, C\}$ and each pair $\xi,\eta$ in $\Delta^{C+1}$, that 
$|g_k(\xi) - g_k(\eta)| \le 3d^2 (C+1)^2 \|\xi-\eta\|_\infty$;
we begin this section by obtaining a sharper version of this Lipschitz-like inequality, which can be sharpened further in the two regimes we study.

For $0 \le b \le a \le 1$, we set
\begin{eqnarray*}
H[a,b] &=& (a-b) \sum_{r=1}^d (1-a^2)^{r-1} (1-b^2)^{d-r},
\end{eqnarray*}
so that, for $\xi \in \Delta^{C+1}$, and $k \in \{ 0, \dots, C-1\}$,
$$
g_k(\xi) = 2 \xi(C) \xi(\le k) H[\xi(\le\! k),\xi(\le\! k\!-\!1)] + 2 \xi(C) \xi(k) \sum_{i=k+1}^{C-1} H[\xi(\le\! i), \xi(\le\! i\!-\!1)];
$$
here we used the fact that $\xi(j) = \xi(\le\! j) - \xi(\le\! j\!-\!1)$ for each $j$.
Observe that $H[a,b] \le d (a-b)$.  We also see that
\begin{eqnarray*}
\left| \frac{\partial H[a,b]}{\partial a}\right|  &=& \Big| \sum_{r=1}^d (1-a^2)^{r-1} (1-b^2)^{d-r} \\
&&\mbox{} - 2a(a-b) \sum_{r=2}^d (r-1) (1-a^2)^{r-2} (1-b^2)^{d-r} \Big| \\
&\le& d + 2 \sum_{r=2}^d (r-1) a^2 (1-a^2)^{r-2} \le 3d,
\end{eqnarray*}
and similarly $\left| \frac{\partial H[a,b]}{\partial b}\right| \le 3d$.
Therefore, whenever $0 \le b \le a \le 1$ and $0\le b'\le a' \le 1$, we have
$\big| H[a,b] - H[a',b'] \big| \le 3d \big( |a-a'| + |b-b'| \big)$.

We write, for any two vectors $\xi, \eta \in \Delta^{C+1}$, and any $k \in \{ 0, \dots, C-1\}$,
\begin{eqnarray*}
\lefteqn{ g_k(\xi) - g_k(\eta) } \\
&=& 2[\xi(C)-\eta(C)] \Big\{ \xi(\le\! k) H[\xi(\le\! k),\xi(\le\! k\!-\!1)]  \\
&&\mbox{} + \xi(k) \sum_{i=k+1}^{C-1} H[\xi(\le\! i), \xi(\le\! i\!-\!1)] \Big\} \\
&&\mbox{} + 2\eta(C) [\xi(\le\! k) - \eta(\le\! k)] H[\xi(\le\! k),\xi(\le\! k\!-\!1)] \\
&&\mbox{} + 2\eta(C) \eta(\le\! k) \left[ H[\xi(\le\! k),\xi(\le\! k\!-\!1)] - H[\eta(\le\! k),\eta(\le\! k\!-\!1)] \right] \\
&&\mbox{} + 2\eta(C) [\xi(k) - \eta(k)] \sum_{i=k+1}^{C-1} H[\xi(\le\! i), \xi(\le\! i\!-\!1)] \\
&&\mbox{} + 2\eta(C) \eta(k) \sum_{i=k+1}^{C-1} \left[ H[\xi(\le\! i), \xi(\le\! i\!-\!1)] - H[\eta(\le\! i), \eta(\le\! i\!-\!1)] \right].
\end{eqnarray*}
So, using our bounds on $H[a,b]$ and $\big| H[a,b] - H[a',b'] \big|$, we have
\begin{eqnarray*}
\lefteqn{ | g_k(\xi) - g_k(\eta) |} \\
&\le& 2 |\xi(C)-\eta(C)| d \left\{ \xi(\le\! k) \xi(k) + \xi(k) \sum_{i=k+1}^{C-1} \xi(i) \right\} \\
&&\mbox{} + 2 \eta(C) | \xi(\le\! k) - \eta(\le\! k) | d \xi(k) \\
&&\mbox{} + 2 \eta(C) \eta(\le\! k) 3d \big( |\xi(\le\! k) - \eta(\le\! k)| + |\xi(\le\! k\!-\!1) - \eta(\le\! k\!-\!1)| \big) \\
&&\mbox{} + 2 \eta(C) | \xi(k) - \eta(k) | d \sum_{i=k+1}^{C-1} \xi(i) \\
&&\mbox{} + 2 \eta(C) \eta(k) 3d \sum_{i=k+1}^{C-1} \big( |\xi(\le\! i) - \eta(\le\! i)| + |\xi(\le\! i\!-\!1) - \eta(\le\! i\!-\!1)| \big) \\
&\le& d \xi(k) \| \xi - \eta \|_1 \xi(\le\! C\!-\!1) + \eta(C) d \Big[ \xi(k) \|\xi - \eta\|_1 + 6 \|\xi - \eta\|_1 \eta(\le\! k) \\
&&\mbox{} + 2 | \xi(k) - \eta(k) | \xi(\le\! C\!-\!1) + 6C \eta(k) \| \xi-\eta\|_1 \Big].
\end{eqnarray*}
Summing over $k =0, \dots, C-1$, we have that, for any vectors $\xi, \eta \in \Delta^{C+1}$,
\begin{eqnarray}
\lefteqn{\| g(\xi) - g(\eta) \|_1} \nonumber \\
&\le& d \|\xi-\eta\|_1 \xi(\le\! C\!-\!1)^2 + \eta(C) d \Big( \|\xi-\eta\|_1 \xi(\le\! C\!-\!1) + 6 C \|\xi-\eta\|_1 \eta(\le\! C\!-\!1) \nonumber \\
&&\mbox{} +2 \|\xi-\eta\|_1 \xi(\le\! C\!-\!1) + 6C \|\xi-\eta\|_1 \eta(\le\! C\!-\!1) \Big) \nonumber \\
&\le& \big( 1 + \eta(C) (12C + 3) \big) d \| \xi-\eta\|_1 \max(\xi(\le\! C\!-\!1), \eta(\le\! C\!-\!1)). \label{eq.lip}
\end{eqnarray}


At this point, we separate the calculations for small and large $\lambda$.  Suppose first that $\lambda < \lambda_0(d) = 1/(8d+4)$.
In this case, we simplify the bound (\ref{eq.lip}) above to obtain, for any $\xi$ and $\eta$ in $\Delta^{C+1}$, that
$$
\| g(\xi) - g(\eta) \|_1 \le \big( 1 + \eta(C) (12C + 3) \big) d \| \xi-\eta\|_1.
$$
Suppose also that $\eta$ satisfies the fixed-point equation
\begin{equation} \label{eq.fixed-point}
- \lambda \eta(j) - \lambda g_j(\eta) + (j+1) \eta(j+1) = 0 \quad (j =0, \dots, C-1).
\end{equation}
Noting that $g_j(\eta) \le 2d \eta(j) \eta(C)$ for all $j$, we obtain
$$
\eta(j+1) \le \frac{\eta(j) \lambda ( 1 + 2d \eta(C) )}{j+1}, \mbox{ and hence } \eta(j) \le \frac{\big(\lambda (1+ 2d \eta(C)\big)^j}{j!},
$$
for each $j$.
For $\lambda \le \lambda_0(d) = 1/(8d+4)$, we apply this inequality to obtain
$\eta(C) \le 1/4^C C! \le 1/4$,
and then apply it again to obtain
$$
(12C + 3) \eta(C) \le \frac{12C + 3}{8^C C!} < 2.
$$
Thus, for any $\xi \in \Delta^{C+1}$, and any $\eta \in \Delta^{C+1}$ satisfying (\ref{eq.fixed-point}), we have
$$
\| g(\xi) - g(\eta) \|_1 \le 3d \| \xi-\eta \|_1,
$$
provided $\lambda \le \lambda_0(d)$.

Now let $\zeta$ be any vector in $\Delta^{C+1}$ satisfying (\ref{eq.bound}), and $\eta$ any vector in $\Delta^{C+1}$ satisfying (\ref{eq.fixed-point}).
We have, for $j=0, \dots, C-1$,
$$
(j+1) | \zeta(j+1) - \eta(j+1) | \le \lambda ( | \zeta(j) - \eta(j) | + | g_j(\zeta) - g_j(\eta)| ) +
40 \lambda d^2 (C+1)^3 \frac{\log n}{\sqrt n}.
$$
Summing over $j=0, \dots, C-1$ now gives, provided $\lambda < \lambda_0(d) = 1/(8d+4)$,
\begin{eqnarray*}
\| \zeta-\eta \|_1 &\le& 2 \sum_{j=0}^{C-1} |\zeta(j+1) - \eta(j+1)| \\
&\le& 2 \lambda \big( \| \zeta - \eta \|_1 +  \| g(\zeta) - g(\eta) \|_1 \big) + 80 \lambda d^2 (C+1)^4 \frac{\log n}{\sqrt n} \\
&\le& (6d+2) \lambda \| \zeta-\eta \|_1  + 80 \lambda d^2 (C+1)^4 \frac{\log n}{\sqrt n} \\
&\le& \frac 34 \| \zeta-\eta \|_1  + 80 \lambda d^2 (C+1)^4 \frac{\log n}{\sqrt n},
\end{eqnarray*}
and so
$$
\| \zeta-\eta \|_1 \le 320 \lambda d^2 (C+1)^4 \frac{\log n}{\sqrt n}.
$$

If $\eta$ and $\zeta$ are two elements of $\Delta^{C+1}$ satisfying~(\ref{eq.fixed-point}), then we can apply exactly the same argument with the
term $40\lambda d^2 (C+1)^3 \frac{\log n}{\sqrt n}$, coming from the right-hand-side of (\ref{eq.bound}), replaced by zero, and we deduce that
$\| \zeta - \eta \| = 0$ and so $\zeta = \eta$; this amounts to an application of the contraction mapping theorem.  This establishes
part~(3) of Theorem~\ref{thm.low-high}, stating that there is a unique solution $\eta^*$ of (\ref{eq.fixed-point}), in this range of $\lambda$.

We now conclude that any $\zeta\in \Delta^{C+1}$ satisfying~(\ref{eq.bound}) lies within $\ell_1$ distance
$320 \lambda d^2 (C+1)^4 \frac{\log n}{\sqrt n}$ of $\eta^*$, and so in particular, if $\widehat{Z}_t$ is a copy of the process in equilibrium, then
$$
\left| \frac{1}{n-1} \E_\pi f_{v,j}(\widehat{Z}_t) - \eta^*(j) \right| \le 320 \lambda d^2 (C+1)^4 \frac{\log n}{\sqrt n}
$$
for each $j$, and this implies part~(4) of Theorem~\ref{thm.low-high} in this regime.

\medskip


For $\lambda \ge \lambda_1(d,C)$, we take the same general approach.
In this case, we deduce from (\ref{eq.lip}) that, for any $\eta$ and $\zeta$ in $\Delta^{C+1}$,
$$
\| g(\zeta) - g(\eta) \|_1 \le 16C d \| \zeta-\eta \|_1 \max(\zeta(\le\! C\!-\!1), \eta(\le\!C\!-\!1)).
$$
Now we assume that $\eta$ satisfies the fixed-point equation (\ref{eq.fixed-point}), and $\zeta$ satisfies the approximate version
(\ref{eq.bound}).  We have that, for $j = 0, \dots, C-1$,
$$
\eta(j) \le \frac{j+1}{\lambda} \eta(j+1) \le \frac{C}{\lambda} \eta(j+1),
$$
and so $\eta(C-i) \le (C/\lambda)^i \eta(C)$, for $i =1, \dots, C$.  Thus
$$
\eta(C) \ge \frac{1}{\sum_{i=0}^C (C/\lambda)^i} \ge 1 - \frac{C}{\lambda},
$$
and so $\eta(\le\!C\!-\!1) \le C/\lambda$.
Essentially the same is true for $\zeta$:
$$
\zeta(j) \le \frac{C}{\lambda} \zeta(j+1) + 40 \lambda d^2 (C+1)^3 \frac{\log n}{\sqrt n},
$$
which leads to $\zeta(C-i) \le (C/\lambda)^i \zeta(C) + 80 \lambda d^2 (C+1)^3 \frac{\log n}{\sqrt n}$ for
each $i = 0, \dots, C$, and hence
$$
\zeta(\le\!C\!-\!1) \le \frac{C}{\lambda} + 80 \lambda d^2 (C+1)^4 \frac{\log n}{\sqrt n} \le \frac{2C}{\lambda},
$$
for sufficiently large $n$.
Therefore we obtain that
$$
\| g(\zeta) - g(\eta) \|_1 \le \frac{32C^2 d}{\lambda} \| \zeta-\eta\|_1 \le \frac14 \| \zeta-\eta \|_1,
$$
whenever $\lambda \ge \lambda_1(d,C) = 8000C^2d \log(240C^2d)$, $\eta \in \Delta^{C+1}$ satisfies (\ref{eq.fixed-point}) and $\zeta \in \Delta^{C+1}$
satisfies~(\ref{eq.bound}).  Now we note that, for $j =0, \dots, C-1$,
$$
|\zeta(j)-\eta(j)| \le \frac{j+1}{\lambda} |\zeta(j+1) - \eta(j+1)| + |g_j(\zeta) - g_j(\eta)| + 40d^2 (C+1)^3 \frac{\log n}{\sqrt n}.
$$
Summing over $j = 0, \dots, C-1$ yields
\begin{eqnarray*}
\| \zeta-\eta\|_1 &\le& 2 \sum_{j=0}^{C-1} |\zeta(j) - \eta(j)| \\
&\le& \frac{2C}{\lambda} \|\zeta-\eta\|_1 + 2 \| g(\zeta) - g(\eta) \| + 80d^2 (C+1)^4 \frac{\log n}{\sqrt n} \\
&\le& \frac{1}{4} \|\zeta-\eta\|_1 + \frac{1}{4} \|\zeta-\eta\|_1 + 80d^2 (C+1)^4 \frac{\log n}{\sqrt n}.
\end{eqnarray*}
Hence, provided $\lambda \ge \lambda_1(d,C)$, $\eta$ satisfies (\ref{eq.fixed-point}) and $\zeta$ satisfies (\ref{eq.bound}),
$$
\| \zeta-\eta\|_1 \le 160d^2 (C+1)^4 \frac{\log n}{\sqrt n}.
$$
The same technique shows that there is only one solution $\eta^* \in \Delta^{C+1}$ to the fixed-point equation~(\ref{eq.fixed-point}).  Since the
vector $\zeta$ given by $\zeta(j) = \frac{1}{n-1} \E f_{v,j}(\widehat{Z}_t)$ ($j=0, \dots, C$), where $\widehat{Z}$ is in equilibrium, lies in $\Delta^{C+1}$
and satisfies (\ref{eq.bound}), we then obtain that
$$
\left| \frac{1}{n-1} \E_\pi f_{v,j}(\widehat{Z}_t) - \eta^*(j) \right| \le 160d^2 (C+1)^4 \frac{\log n}{\sqrt n}.
$$
This gives Theorem~\ref{thm.low-high}(3) and~(4) in this regime, completing the proof.

\end{document}